\providecommand{\U}[1]{\protect\rule{.1in}{.1in}}
\newtheorem{theorem}{Theorem}
\numberwithin{theorem}{section}
\theoremstyle{plain}
\newtheorem*{acknowledgement}{Acknowledgement}
\newtheorem{corollary}[theorem]{Corollary}
\newtheorem{definition}[theorem]{Definition}
\newtheorem{lemma}[theorem]{Lemma}
\newtheorem{proposition}[theorem]{Proposition}
\theoremstyle{remark}
\newtheorem{remark}[theorem]{Remark}
\newtheorem{caution}[theorem]{Caution}
\newtheorem{example}[theorem]{Example}
\numberwithin{equation}{section}
\definecolor{lg}{rgb}{0.8,0.8,0.8}
\begin{document}
\title[ ]{Without real vector spaces all regulators are rational}
\author[O. Braunling]{Oliver Braunling}
\address{Dortmund University of Applied Sciences, Emil-Figge-Stra\ss e 42, 44227
Dortmund, Germany}
\thanks{The author acknowledges support for this article as part of Grant CNS2023-145167
funded by MICIU/AEI/10.13039/501100011033.}

\begin{abstract}
Every LCA group has a Haar measure unique up to rescaling by a positive
scalar. Clausen has shown that the Haar measure describes the universal
determinant functor of the category LCA in the sense of Deligne. We show that
when only working with LCA groups without allowing real vector spaces, any
conceivable determinant functor is unique up to rescaling by at worst
\textit{rational} values. As a result, no transcendental real nor $p$-adic
regulators could ever show up in special $L$-value conjectures (as in Tamagawa
number conjectures or Weil-\'{e}tale cohomology) if anyone had the, admittedly
outlandish and bizarre, idea to try to circumvent incorporating a real (Betti)
realization of the motive.

\end{abstract}
\maketitle

\section{Overview}

The main result of this note will not shock anyone: It is hard to come by a
transcendental real number without using real numbers. Let that sink in.
However, the result we show is more precise and we mean different things: Real
numbers as an object in the category of LCA groups, versus real numbers
showing up as regulators. The standard conjectures on special $L$-values
intertwine arithmetic cohomological values with transcendental regulator
values. Dirichlet's analytic class number formula or the B-SD conjecture are
probably the most famous examples. Conjecturally, in the picture devised by
Deligne and Beilinson, the regulators occur from determinant mismatches
resulting from the comparison of various determinant lines $\Lambda^{\max
}H^{\bullet}(-)$ defined through groups,

- of $p$-adic type, coming from $p$-adic realizations for all primes $p$,

- of real type, coming from the Betti realization,

- and of integral or rational type, coming from motivic cohomology.

There are various formulations of conjectures for this picture, each a bit
different:\ For example, the Bloch--Kato picture of motivic Tamagawa numbers
\cite{MR1086888,MR1265546}, or Lichtenbaum's picture based on Weil-\'{e}tale
cohomology \cite{MR2552104,MR4699877}. We are mostly inspired by recent work
in the direction of Weil-\'{e}tale cohomology theories with coefficients in
LCA groups and output as LCA groups. This has recently been featured
prominently in the work of Flach--Morin \cite{MR3874942} and Geisser--Morin
\cite{MR4699875}. As well as in the work of Artusa \cite{MR4831262,artusa2025}%
, but also in older works like Kottwitz--Shelstad \cite[Appendix E]%
{MR1687096}, Oesterl\'{e} \cite{MR750319}.

However, to show our claim, we will not even touch motives nor Weil-\'{e}tale
cohomology anywhere. We just show that no transcendental (real) values can
occur from determinant functors when working on the category of LCA groups,
\textit{but disallowing real vector space summands}. This is possible because
the determinant lines in the various forms of special $L$-value conjectures
all come from determinant functors in the sense of Deligne. We can prove that
\textit{any} determinant functor on LCA groups $-$ as long as no real vector
spaces ever show up $-$ must factor through \textit{rational} numbers.

Write $\mathsf{LCA}$ for the quasi-abelian category of locally compact abelian
(LCA) groups. Write $\mathsf{LCA}_{\operatorname*{vf}}\subset\mathsf{LCA}$ for
the full subcategory of those groups not having a real line $\mathbb{R}$ as a
direct summand (in group theory, such groups are called `\textit{vector-free}%
', whence the subscript originates). This is an exact category, see
\S \ref{sect_LCA}. Real quotients, like $\mathbb{R}^{n}/\Lambda$ for a full
rank lattice $\Lambda$, remain allowed.

\begin{theorem}
\label{thm_RationalityOfHaarTorsor}The restriction of the Haar functor%
\[
Ha\colon\mathsf{LCA}^{\times}\rightarrow\mathsf{Tors}(\mathbb{R}_{>0}^{\times
})
\]
to $\mathsf{LCA}_{\operatorname*{vf}}$ only attains rational values:%
\[
Ha^{\mathbb{Q}}\colon\mathsf{LCA}_{\operatorname*{vf}}^{\times}\rightarrow
\mathsf{Tors}(\mathbb{Q}_{>0}^{\times})\text{.}%
\]
This functor $Ha^{\mathbb{Q}}$ is the universal determinant functor of
$\mathsf{LCA}_{\operatorname*{vf}}$, i.e., for \emph{any} determinant functor
$\mathcal{D}\colon\mathsf{LCA}_{\operatorname*{vf}}^{\times}\rightarrow
\mathsf{P}$ there exists a factorization%
\[%
{
\begin{tikzcd}
	{\mathsf{LCA}_{\operatorname{vf} }^{\times}} && {(\mathsf{Tors}(\mathbb
{Q}_{>0}^{\times}),\otimes)} \\
	\\
	&& {(\mathsf{P},\boxtimes)}
	\arrow["{{Ha^\mathbb{Q} }}", from=1-1, to=1-3]
	\arrow["{{\mathcal{D}}}"', from=1-1, to=3-3]
	\arrow["f", dashed, from=1-3, to=3-3]
\end{tikzcd}
}%
\]
with $f$ a symmetric monoidal functor of Picard groupoids.
\end{theorem}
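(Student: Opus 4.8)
The plan is to reduce the assertion to a computation of the universal Picard groupoid of $\mathsf{LCA}_{\operatorname{vf}}$. Determinant functors out of an exact category $\mathcal{E}$ correspond to symmetric monoidal functors out of the Picard groupoid $\mathcal{P}(\mathcal{E})$ with $\pi_0=K_0(\mathcal{E})$ and $\pi_1=K_1(\mathcal{E})$, the universal one being the canonical $\mathcal{E}^{\times}\to\mathcal{P}(\mathcal{E})$. So it suffices to prove two things: that $K_0(\mathsf{LCA}_{\operatorname{vf}})=0$, so that $\mathcal{P}(\mathsf{LCA}_{\operatorname{vf}})$ is connected and hence of the form $\mathsf{Tors}(\pi_1)$; and that the module (Haar) homomorphism identifies $\pi_1=K_1(\mathsf{LCA}_{\operatorname{vf}})$ with $\mathbb{Q}_{>0}^{\times}$. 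Granting this, the universal target is $\mathsf{Tors}(\mathbb{Q}_{>0}^{\times})$, the universal functor is $Ha^{\mathbb{Q}}$, and for any $\mathcal{D}$ the factoring $f$ is forced on $\pi_1$ by the homomorphism $\mathbb{Q}_{>0}^{\times}\to\pi_1(\mathsf{P})$ sending a prime $p$ to the value of $\mathcal{D}$ on the automorphism of $\mathbb{Q}_p$ of module $p$.

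For $K_0=0$, and for two auxiliary vanishings, I would invoke an Eilenberg swindle. The full subcategory of compact groups is closed under arbitrary products and the full subcategory of discrete groups under arbitrary direct sums; both remain locally compact, so each is flasque with vanishing $K$-theory. Since every vector-free $G$ sits in an admissible extension $0\to U\to G\to G/U\to 0$ with $U$ compact open and $G/U$ discrete, additivity gives $[G]=[U]+[G/U]=0$, so $K_0=0$; the same flasqueness shows $[C,\gamma]=0$ and $[D,\delta]=0$ in $K_1$ for $C$ compact and $D$ discrete. Rationality and surjectivity are then the elementary ends: for $\alpha\in\operatorname{Aut}(G)$ the image $\alpha U$ is again compact open, commensurable with $U$, so $\operatorname{mod}(\alpha)=[\alpha U:U\cap\alpha U]/[U:U\cap\alpha U]\in\mathbb{Q}_{>0}^{\times}$, which is precisely the factorization through $\mathsf{Tors}(\mathbb{Q}_{>0}^{\times})$ defining $Ha^{\mathbb{Q}}$; and multiplication by $p$ on $\mathbb{Q}_p$ has module $p^{-1}$, so every prime lies in the image.

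The crux is injectivity of $\operatorname{mod}\colon K_1(\mathsf{LCA}_{\operatorname{vf}})\to\mathbb{Q}_{>0}^{\times}$, equivalently that $\operatorname{mod}(\alpha)=1$ forces $[G,\alpha]=0$. My plan is a reduction to the case where the measure-preserving $\alpha$ stabilizes some compact open subgroup: if $\alpha U=U$, then $\alpha$ acts on $0\to U\to G\to G/U\to 0$ via $(\alpha|_U,\alpha,\bar\alpha)$, and additivity with the two auxiliary vanishings gives $[G,\alpha]=[U,\alpha|_U]+[G/U,\bar\alpha]=0$. When $\alpha$ has finite order the finite intersection $\bigcap_k\alpha^kU$ is an $\alpha$-stable compact open subgroup and one is done; the genuine obstacle is to produce such a stable subgroup for a general measure-preserving automorphism of infinite order, where the naive intersection need not be open. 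I expect this to be the technical heart, to be handled by exploiting the forced equality $[U:U\cap\alpha U]=[\alpha U:U\cap\alpha U]$ to build the required invariant lattice; alternatively, since $Ha^{\mathbb{Q}}$ factors as $K_1(\mathsf{LCA}_{\operatorname{vf}})\xrightarrow{\iota_\ast}K_1(\mathsf{LCA})\xrightarrow{\cong}\mathbb{R}_{>0}^{\times}$ via Clausen's theorem, injectivity of $\operatorname{mod}$ is equivalent to injectivity of $\iota_\ast$, which one reads off a localization sequence whose relative term isolates the real-vector-space part and contributes exactly $\mathbb{R}_{>0}^{\times}/\mathbb{Q}_{>0}^{\times}$ on $\pi_1$.

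Assembling $K_0=0$ with the identification $K_1\cong\mathbb{Q}_{>0}^{\times}$ then yields $\mathcal{P}(\mathsf{LCA}_{\operatorname{vf}})\simeq\mathsf{Tors}(\mathbb{Q}_{>0}^{\times})$ with universal functor $Ha^{\mathbb{Q}}$, and the desired $f$ is the symmetric monoidal functor of Picard groupoids induced by the homomorphism on $\pi_1$ described above.
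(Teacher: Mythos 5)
Your plan's sound parts are the ones where the theorem's difficulty does not live. The Eilenberg swindles do give $K_0(\mathsf{LCA}_{\operatorname*{vf}})=0$ and kill compact and discrete automorphism classes, and commensurability of compact open subgroups does give rationality of the modulus and surjectivity onto $\mathbb{Q}_{>0}^{\times}$; all of that is correct and even more elementary than the paper's treatment of those points. The gap is the injectivity of $\operatorname{mod}\colon K_1(\mathsf{LCA}_{\operatorname*{vf}})\to\mathbb{Q}_{>0}^{\times}$, which you rightly call the crux, but both routes you sketch for it break down. Route (a) rests on producing, for a measure-preserving automorphism $\alpha$, an $\alpha$-stable compact open subgroup. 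No such subgroup need exist: take $G=\mathbb{Q}_p\oplus\mathbb{Q}_p$ and $\alpha=\operatorname{diag}(p,p^{-1})$, which has modulus $1$; if $U$ were compact open with $\alpha U=U$, then its image under the first projection would be a compact open subgroup of $\mathbb{Q}_p$, hence of the form $p^{m}\mathbb{Z}_p$, and it would satisfy $p\cdot p^{m}\mathbb{Z}_p=p^{m}\mathbb{Z}_p$, which is absurd. (For this particular $\alpha$ the class $[G,\alpha]$ does vanish, by additivity, but that only shows the reduction fails, not that it can be repaired; no "invariant lattice" can be built from the index equality, because none exists.) Moreover, even a proof that every class $[G,\alpha]$ with $\operatorname{mod}(\alpha)=1$ vanishes would not yield injectivity unless you also know that automorphism classes generate $K_1$ of this exact category; that holds for split exact categories, but is not automatic here, where by Nenashev's presentation $K_1$ is generated by double short exact sequences rather than automorphisms.

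Route (b) is worse off: there is no localization sequence $K(\mathsf{LCA}_{\operatorname*{vf}})\to K(\mathsf{LCA})\to K(\text{relative term})$, because $\mathsf{LCA}_{\operatorname*{vf}}$ is not closed under extensions in $\mathsf{LCA}$ (witness $\mathbb{Z}\hookrightarrow\mathbb{R}\twoheadrightarrow\mathbb{T}$; this is exactly the Caution in \S\ref{sect_LCA}), so the inclusion does not sit in a Verdier quotient situation, and declaring that the relative term contributes $\mathbb{R}_{>0}^{\times}/\mathbb{Q}_{>0}^{\times}$ on $\pi_1$ presupposes the theorem. The paper obtains injectivity structurally by localizing in the opposite direction: the compact groups $\mathsf{C}$ form a suitably percolating subcategory of $\mathsf{LCA}_{\operatorname*{vf}}$ with quotient equivalent to $\mathsf{Ab}/\mathsf{Ab}_{\operatorname*{fin}}$ (Lemma \ref{lemma_p_2}); the very swindles you noticed then kill $K(\mathsf{C})$ and $K(\mathsf{Ab})$ inside the resulting fiber sequences, giving $K(\mathsf{LCA}_{\operatorname*{vf}})\simeq\Sigma K(\mathsf{Ab}_{\operatorname*{fin}})\simeq\Sigma\bigoplus_{p}K(\mathbb{F}_p)$ (Corollary \ref{cor_1}), hence $K_1(\mathsf{LCA}_{\operatorname*{vf}})\cong\bigoplus_p\mathbb{Z}$ with injectivity built in. The remaining work, done in Theorem \ref{thm1} by an explicit Gillet--Grayson path-lifting computation, is to identify the generator of the $p$-th summand (up to sign) with the class of multiplication by $p$ on $\mathbb{Q}_p$, whose Haar modulus is $p^{-1}$; this is what pins down the dashed arrow of Diagram \ref{d_3} as the inclusion $\mathbb{Q}_{>0}^{\times}\subset\mathbb{R}_{>0}^{\times}$. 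A smaller omission in the same spirit: even the rationality half of the statement needs the rational subtorsors $Ha^{\mathbb{Q}}(X)\subset Ha(X)$ to be compatible with the isomorphisms attached to \emph{exact sequences}, not just with automorphisms; this is the content of the root-measure construction of \S\ref{sect_RationalizedHaarMeasure} and Theorem \ref{thm_HaQCharacterization}, which your modulus computation does not address.
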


See Theorem \ref{thm_main} for a precise statement and the proof, but let us
illustrate an easy consequence:\ Suppose you are given any connected diagram
of objects in $\mathsf{LCA}$ and all arrows are isomorphisms. Then fixing a
Haar measure on any object and pushing it forward along the arrows to any
other object, we may get several \textit{distinct} normalizations%
\[%
{
\adjustbox{scale=0.42}{
\begin{tikzcd}
	&&&& {T'} \\
	& {Z'} &&&& T & {J'} \\
	{V'} && Z &&&&& J \\
	V && U && {Y'} && {X'} \\
	&&&&& Y && X
	\arrow[from=1-5, to=2-6]
	\arrow[from=1-5, to=2-7]
	\arrow[from=1-5, to=4-5]
	\arrow[from=2-2, to=1-5]
	\arrow[from=2-2, to=3-3]
	\arrow[from=2-2, to=4-3]
	\arrow[from=2-6, to=3-8]
	\arrow[from=2-6, to=5-6]
	\arrow[from=2-7, to=3-8]
	\arrow[from=2-7, to=4-7]
	\arrow[from=3-1, to=2-2]
	\arrow[from=3-1, to=3-3]
	\arrow[from=3-3, to=2-6]
	\arrow[from=3-3, to=4-3]
	\arrow[from=3-3, to=5-8]
	\arrow[from=3-8, to=5-8]
	\arrow[from=4-1, to=2-2]
	\arrow[from=4-1, to=3-1]
	\arrow[from=4-1, to=3-3]
	\arrow[from=4-1, to=5-6]
	\arrow[from=4-3, to=4-5]
	\arrow[from=4-5, to=4-7]
	\arrow[from=4-5, to=5-6]
	\arrow[from=4-7, to=5-8]
	\arrow[from=5-8, to=5-6]
\end{tikzcd}}
}%
\]
depending on what path we follow. The first point is: If all objects are in
$\mathsf{LCA}_{\operatorname*{vf}}$, then the normalization can only differ by
rational numbers $-$ irrespective of the shape of the diagram. Secondly (and
this is the true content of the theorem):\ \textit{Any} determinant functor in
the sense of Deligne \cite{MR902592} (or see Def. \ref{def_DeterminantFunctor}
below) must have the same property. Of course, you may still decide to
normalize the Haar measure on, say $\mathbb{Q}_{p}^{n}$, such that%
\[
\operatorname*{vol}\left(  \mathbb{Z}_{p}^{n}\right)  :=\sqrt{2\pi}\text{,
}\zeta(3)\text{ or }e\text{ etc.}%
\]
if you so please, so there would suddenly a transcendental number be involved
$-$ but then any pushforwards of this measure along various paths in any
connected diagram can only differ by rational factors.

To rule out a possible misunderstanding: If we consider the diagram%
\begin{equation}%
{
\begin{tikzcd}
	{\mathbb{Q}_{p}} && {\mathbb{Q}_{p}}
	\arrow["{\cdot1}", shift left=2, from=1-1, to=1-3]
	\arrow["{\cdot\operatorname{log}_p(*)}"', shift right=2, from=1-1, to=1-3]
\end{tikzcd}
}
\label{l_h_1}%
\end{equation}
with some transcendental logarithm value, then clearly the pushforwards of the
(trivial)\ determinant line on the left side along either arrow will differ by
$\log_{p}(\ast)$, which need not be rational. But the point is: The $p$-adic
determinant line does not extend to a determinant functor on $\mathsf{LCA}$ or
$\mathsf{LCA}_{\operatorname*{vf}}$. It is impossible to systematically extend
it to respect exact sequences like%
\[
\mathbb{Z}_{p}\hookrightarrow\mathbb{Q}_{p}\twoheadrightarrow\mathbb{Q}%
_{p}/\mathbb{Z}_{p}\text{,}%
\]
where it is unclear\footnote{and as one can show: impossible} how to attach a
line to $\mathbb{Q}_{p}/\mathbb{Z}_{p}$ such that all axioms of a determinant
functor remain in place. The Haar measure, however, does indeed prolong to all
of $\mathsf{LCA}$, but it only sees the valuation of $p$-adic numbers. So,
writing $\log_{p}(\ast)$ as a power $p^{r}u$ with $u\in\mathbb{Z}_{p}^{\times
}$ a $p$-adic unit, one can check that the ratio of volumes in Eq. \ref{l_h_1}
will be $p^{-r}$, which \textit{is} rational.\medskip

\textit{Strategy of the proof:} Our result is basically just a computation in
$K$-theory. By an old idea of Deligne, every exact category has a universal
determinant functor which all determinants must factor through. This stems
from \cite{MR902592}. He also showed that this universal determinant is
completely determined by the specifics of the $K$-theory spectrum of the
category, truncated to degrees $[0,1]$. This is a tiny bit more information
than just knowing $K_{0}$ and $K_{1}$, and also involves the glueing data of
the Postnikov tower of these two layers (the stable $k$-invariant). It turns
out that it is not too hard to compute these invariants. While this argument
exhibits the group $\mathbb{Q}_{>0}^{\times}$ abstractly, the tricky part is
to link this up to the computation of the rescaling factor under the Haar
measure.\medskip

\textit{Conventions:} In this text, $K$ denotes non-connective $K$-theory,
$K^{\operatorname*{conn}}$ denotes connective (Quillen) $K$-theory,
$\mathcal{U}^{\operatorname*{loc}}$ denotes the localizing non-commutative
motive, $\left.  _{n}A\right.  :=\{a\in A\mid a^{n}=1\}$ denotes the elements
in an abelian group killed by $n$.

\section{LCA groups without real line summands\label{sect_LCA}}

Let $\mathsf{LCA}$ (resp. $\mathsf{LCA}_{\operatorname*{vf}}$) be the category
whose objects are locally compact Hausdorff topological abelian groups (resp.
without a real line direct summand) and morphisms are continuous group
homomorphisms. This category has all kernels and cokernels and is
quasi-abelian. Its natural exact structure is such that%
\[
G^{\prime}\hookrightarrow G\twoheadrightarrow G^{\prime\prime}%
\]
is exact if the first arrow is an injective closed map (these are the
admissible monics) and the second arrow is a surjective closed map (these are
the admissible epics) and the underlying sequence of abelian groups is exact.

\begin{proposition}
[Structure theorem for LCA]\label{prop_struct}Every group $G\in\mathsf{LCA}$
is (non-canonically) isomorphic to $G\simeq G_{0}\oplus\mathbb{R}^{n}$ for
some $n<\infty$ and $G_{0}$ has a (non-unique)\ clopen compact subgroup $C$,
i.e., there is an exact sequence%
\[
C\oplus\mathbb{R}^{n}\hookrightarrow G\twoheadrightarrow D
\]
with $D$ discrete.
\end{proposition}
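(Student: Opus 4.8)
The plan is to reduce everything to the classical structure theory of compactly generated LCA groups and then to split off the vector part by a homological argument. First I would produce a well-behaved open subgroup: since $G$ is locally compact, I choose a compact symmetric neighbourhood $U\ni 0$ and let $H:=\langle U\rangle=\bigcup_{k\geq 1}kU$ be the subgroup it generates. Then $H$ is open (it contains an open neighbourhood of $0$) and compactly generated, and because an open subgroup of a topological group is automatically closed, the quotient $G/H$ is discrete. So from the outset we have an exact sequence $H\hookrightarrow G\twoheadrightarrow G/H$ with $H$ open and $G/H$ discrete; it remains to analyse $H$ and to transport its vector part to all of $G$.

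The second step invokes the principal structure theorem for compactly generated LCA groups, which furnishes a topological isomorphism $H\cong\mathbb{R}^{n}\oplus\mathbb{Z}^{m}\oplus K$ with $K$ compact. I record two consequences. The factor $V:=\mathbb{R}^{n}\oplus 0\oplus 0$ is a closed subgroup of $H$, hence (as $H$ is closed in $G$) a closed subgroup of $G$, so $V\hookrightarrow G$ is an admissible monic. And inside $\mathbb{Z}^{m}\oplus K$ the compact subgroup $K$ is open, because $\mathbb{Z}^{m}$ is discrete; this is the prototype of the clopen compact subgroup we are after.

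The crux is the splitting of the vector subgroup $V\cong\mathbb{R}^{n}$. Here I would use the homological fact that $\mathbb{R}^{n}$ is an injective object of $\mathsf{LCA}$ (part of Moskowitz's description of the projectives and injectives of $\mathsf{LCA}$, resting ultimately on the divisibility and self-duality of $\mathbb{R}$). Applying injectivity to the admissible monic $V\hookrightarrow G$ together with the identity $V\to V$ yields a retraction $r\colon G\to V$, and hence a direct sum decomposition $G\cong\mathbb{R}^{n}\oplus G_{0}$ with $G_{0}:=\ker r\cong G/V$, which is the non-canonical splitting claimed in the proposition.

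It then remains to exhibit the compact open subgroup of $G_{0}$. Since $V\subseteq H$ and $H$ is open in $G$, the image of $H$ under the open quotient map $G\to G/V=G_{0}$ is an open subgroup isomorphic to $(\mathbb{R}^{n}\oplus\mathbb{Z}^{m}\oplus K)/\mathbb{R}^{n}\cong\mathbb{Z}^{m}\oplus K$; its compact subgroup $K$ is open there, hence open in $G_{0}$, so setting $C:=K$ gives a clopen compact subgroup with $D:=G_{0}/C$ discrete. Transporting $C$ back through the splitting, $C\oplus\mathbb{R}^{n}$ is an open (hence admissibly monic) subgroup of $G$ with discrete quotient $D$, which is exactly the asserted exact sequence. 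The main obstacle is the splitting step: the reduction to a compactly generated open subgroup and the invocation of the compactly generated structure theorem are routine or classical, but the fact that $\mathbb{R}^{n}$ is a direct summand $-$ equivalently, its injectivity in $\mathsf{LCA}$ $-$ is where the genuine content lies, and it is precisely what fails for the $\mathbb{Z}^{m}$ factor, explaining why only a clopen compact subgroup (rather than a full splitting into $\mathbb{R}^{n}\oplus\text{compact}$) can be extracted.
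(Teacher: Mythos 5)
Your argument is correct step by step, but note that the paper does not actually prove this proposition: it simply cites the classical literature (\cite[Theorem 14.2.18]{MR4510389}), so your text is a reconstruction of the standard proof rather than a parallel to anything in the paper. Your route $-$ pass to the open, compactly generated subgroup $H=\langle U\rangle$, apply the principal structure theorem $H\cong\mathbb{R}^{n}\oplus\mathbb{Z}^{m}\oplus K$, split off the vector subgroup $V\cong\mathbb{R}^{n}$, and push the compact open subgroup $K$ down to $G_{0}$ $-$ is exactly the classical one, and the surrounding details are all sound: $H$ is open hence closed, $V$ is closed in $G$, a continuous retraction $r\colon G\to V$ does give a topological splitting $G\cong V\oplus\ker r$, and $K$ maps to a compact open subgroup of $G_{0}\cong G/V$ with discrete quotient. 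The one point deserving caution is your crux step: you invoke Moskowitz's theorem that $\mathbb{R}^{n}$ is injective in $\mathsf{LCA}$ with respect to closed embeddings. That theorem is true, but in the literature it is standardly deduced \emph{from} the structure theorem you are proving, so read as a self-contained proof your argument is circular at precisely the step you single out as the genuine content. The repair is cheap and is what the textbook proofs do: you never need injectivity along the closed embedding $V\hookrightarrow G$, only an extension of the projection $p\colon H\to V$ along the \emph{open} embedding $H\hookrightarrow G$. Such an extension exists abstractly because $\mathbb{R}^{n}$ is divisible, hence injective as an abstract abelian group, and any abstract homomorphism $r\colon G\to\mathbb{R}^{n}$ whose restriction to the open subgroup $H$ equals the continuous map $p$ is automatically continuous. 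Since $V\subseteq H$ and $p|_{V}=\operatorname{id}_{V}$, this $r$ is the desired retraction, and the remainder of your argument goes through verbatim, now resting only on elementary facts together with the compactly generated structure theorem.
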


See for example \cite[Theorem 14.2.18]{MR4510389}. We immediately obtain that
for $G\in\mathsf{LCA}_{\operatorname*{vf}}$ the same is true, but we
additionally know that $n=0$.

\begin{caution}
[{\cite[\S 10.5]{MR2606234}}]Note that $\mathsf{LCA}_{\operatorname*{vf}}$ is
not a fully exact subcategory of $\mathsf{LCA}$, as is witnessed by the exact
sequence $\mathbb{Z}\hookrightarrow\mathbb{R}\twoheadrightarrow\mathbb{T}$
($\mathbb{T}$ the circle group), which shows that $\mathsf{LCA}%
_{\operatorname*{vf}}$ is not closed under extensions inside $\mathsf{LCA}$.
Correspondingly, we cannot expect $\operatorname*{D}\nolimits_{\infty}%
^{b}(\mathsf{LCA}_{\operatorname*{vf}})\rightarrow\operatorname*{D}%
\nolimits_{\infty}^{b}(\mathsf{LCA})$ to be fully faithful.
\end{caution}

\begin{example}
The group $\mathbb{R}_{d}$ of real numbers, but with the discrete topology, is
still allowed in $\mathsf{LCA}_{\operatorname*{vf}}$. However, all its Haar
measures are rescalings of the counting measure and the measure is only finite
on finite subsets. A comparison of volumes like $[-1,+1]$ vs. $[-\frac{1}%
{2},+\frac{1}{2}]$ as in $\mathbb{R}$ is impossible in $\mathbb{R}_{d}$.
\end{example}

\section{Determinant functors}

\begin{definition}
\label{def_PicardGroupoid}A \emph{Picard groupoid} $(\mathsf{P},\otimes)$ is a
($1$-categorical) groupoid $\mathsf{P}$, equipped with a unital symmetric
monoidal structure%
\[
\otimes\colon\mathsf{P}\times\mathsf{P}\longrightarrow\mathsf{P}%
\]
such that all objects are $\otimes$-invertible. We write $1_{\mathsf{P}}$ for
the neutral element of the $\otimes$-structure.\footnote{Being $\otimes
$\emph{-invertible} means that for every $X\in\mathsf{P}$, there exists some
$X^{-1}\in\mathsf{P}$ such that $X^{-1}\otimes X\simeq1_{\mathsf{P}}$ (with no
requirements on naturality). There exist various variants of this definition,
the strongest being the existence of a functor of inversion $(-)^{-1}%
\colon(\mathsf{P},\otimes)\longrightarrow(\mathsf{P},\otimes^{op})$.
Essentially, one can show that merely assuming the existence of $\otimes
$-inverses, it is always possible to extend this to a functor of inversion.
However, we shall not need to know any of this.}
\end{definition}

Write $\mathsf{Picard}$ for the $2$-category of Picard groupoids with
symmetric monoidal functors as $1$-arrows and natural equivalences as $2$-arrows.

We write $\pi_{0}(\mathsf{P},\otimes)$ for the group of isomorphism classes in
$\mathsf{P}$ with $\otimes$ as its multiplication. For every object
$X\in\mathsf{P}$, the self-symmetry $s_{X,X}\colon X\otimes X\overset{\sim
}{\longrightarrow}X\otimes X$ must satisfy $s_{X,X}\circ s_{X,X}%
=\operatorname*{id}_{X\otimes X}$ by the symmetry axiom of a symmetric
monoidal category. Multiplying from the right with the identity $(X\otimes
X)^{-1}\overset{\sim}{\longrightarrow}(X\otimes X)^{-1}$ induces an
isomorphism%
\begin{equation}
s_{X,X}\otimes X^{-1}\otimes X^{-1}\colon1_{\mathsf{P}}\overset{\sim
}{\longrightarrow}1_{\mathsf{P}}\text{,} \label{lwup6a}%
\end{equation}
known as the \emph{signature} $\varepsilon_{X}$ \cite[\S 2.5.3]{MR0338002}.
One finds that%
\[
\varepsilon_{X\otimes Y}=\varepsilon_{X}\cdot\varepsilon_{Y}\text{.}%
\]
This induces a well-defined group homomorphism%
\[
\varepsilon\colon\pi_{0}(\mathsf{P},\otimes)\longrightarrow\left.  _{2}\pi
_{1}(\mathsf{P},\otimes)\right.
\]
to the $2$-torsion elements of $\pi_{1}(\mathsf{P},\otimes
):=\operatorname*{Aut}_{\mathsf{P}}(1_{\mathsf{P}})$.

\begin{definition}
\label{def_Signature}The morphism $\varepsilon$ is the \emph{stable }%
$k$\emph{-invariant} of $(\mathsf{P},\otimes)$.
\end{definition}

Let $A$ be an abelian group.

\begin{definition}
Write $\mathsf{Tors}(A)$ for the groupoid of $A$-torsors\footnote{if a torsor
is a sheaf for you, regard it as a sheaf on an unnamed one-point set with the
trivial topology.}:

\begin{enumerate}
\item Objects are left $A$-torsors, i.e., a set $X$ with a free simply
transitive left $A$-action%
\[
A\times X\longrightarrow X\text{,}%
\]

\item and morphisms are bijections $\phi$ of $A$-sets, preserving the left
$A$-action%
\[
\phi(a\cdot x)=a\cdot\phi(x)\text{.}%
\]
There is a designated object, the trivial torsor:%
\[
1_{\mathsf{Tors}(A)}:=A
\]
with its natural left action given by multiplication in $A$.
\end{enumerate}
\end{definition}

The automorphisms of $1_{\mathsf{Tors}(A)}$ in this category are canonically
isomorphic to $A$ itself, $\operatorname*{Aut}(1_{\mathsf{Tors}(A)})\cong A$.
The groupoid $\mathsf{Tors}(A)$ can be promoted to become a Picard groupoid:

\begin{definition}
Write $(\mathsf{Tors}(A),\otimes)$ for the Picard groupoid of $A$-torsors:

\begin{enumerate}
\item The monoidal multiplication is%
\[
X\otimes Y:=(X\times Y)/\sim\text{,}%
\]
where $(x,y):\sim(ax,a^{-1}y)$ for all $a\in A$. Equip this set with the left
$A$-action $a\cdot(x,y):=(ax,y)=(x,ay)$.

\item The inverse object $X^{-1}$ is defined to be the same set $X$, but with
the left action $a\cdot_{X^{-1}}x:=a^{-1}\cdot x$. Then%
\[
X\otimes X^{-1}\overset{\sim}{\longrightarrow}1_{\mathsf{Tors}(A)}%
\]
sending $x^{\prime}\otimes x$ to the unique element $a\in A$ such that $a\cdot
x=x^{\prime}$.

\item The associativity and symmetry constraint are trivial, e.g.,%
\begin{align*}
s_{X,Y}\colon Y\otimes X  &  \longrightarrow X\otimes Y\\
(y,x)  &  \longmapsto(x,y)\text{.}%
\end{align*}

\end{enumerate}
\end{definition}

Since all objects in this category are pairwise isomorphic, we have $\pi
_{0}(\mathsf{Tors}(A),\otimes)=0$ and $\pi_{1}(\mathsf{Tors}(A),\otimes)\cong
A$.

If $\psi\colon A\rightarrow B$ is a homomorphism of abelian groups, there is
an induced (symmetric monoidal) basechange functor of Picard groupoids%
\begin{align}
\psi_{\ast}\colon(\mathsf{Tors}(A),\otimes)  &  \longrightarrow(\mathsf{Tors}%
(B),\otimes)\label{lrio0}\\
X  &  \longmapsto(B\times X)/\sim\qquad\text{(some people write }B\otimes
_{A}X\text{)}\nonumber
\end{align}
with $(x,y):\sim(\psi(a)x,a^{-1}y)$ for all $a\in A$, $x\in B$ and $y\in X$.
Equip this set with the left $B$-action $b\cdot(x,y):=(bx,y)$.

Let $\mathsf{C}$ be an exact category. Let $\mathsf{C}^{\times}$ denote its
maximal inner groupoid, i.e., the category with the same objects, but we only
keep isomorphisms as morphisms.

\begin{definition}
[{\cite[\S 4.3]{MR902592}}]\label{def_DeterminantFunctor}Let $\mathsf{C}$ be
an exact category and let $(\mathsf{P},\otimes)$ be a Picard groupoid. A
\emph{determinant functor} on $\mathsf{C}$ is a functor%
\[
\mathcal{D}\colon\mathsf{C}^{\times}\longrightarrow\mathsf{P}%
\]
along with the following extra structure and axioms:

\begin{enumerate}
\item For any exact sequence $\Sigma\colon G^{\prime}\hookrightarrow
G\twoheadrightarrow G^{\prime\prime}$ in $\mathsf{C}$, we are given an
isomorphism%
\begin{equation}
\mathcal{D}(\Sigma)\colon\mathcal{D}(G)\overset{\sim}{\longrightarrow
}\mathcal{D}(G^{\prime})\underset{\mathsf{P}}{\otimes}\mathcal{D}%
(G^{\prime\prime}) \label{lrio1}%
\end{equation}
in $\mathsf{P}$. This isomorphism is required to be functorial in morphisms of
exact sequences.

\item For every zero object $Z$ of $\mathsf{C}$, we are provided with an
isomorphism $z\colon\mathcal{D}(Z)\overset{\sim}{\rightarrow}1_{\mathsf{P}}$
to the neutral object of the Picard groupoid.

\item Suppose $f\colon G\rightarrow G^{\prime}$ is an isomorphism in
$\mathsf{C}$. We write%
\[
\Sigma_{l}\colon0\hookrightarrow G\twoheadrightarrow G^{\prime}\qquad
\text{and}\qquad\Sigma_{r}\colon G\hookrightarrow G^{\prime}\twoheadrightarrow
0
\]
for the depicted exact sequences. We demand that the composition%
\begin{equation}
\mathcal{D}(G)\underset{\mathcal{D}(\Sigma_{l})}{\overset{\sim
}{\longrightarrow}}\mathcal{D}(0)\underset{\mathsf{P}}{\otimes}\mathcal{D}%
(G^{\prime})\underset{z\otimes1}{\overset{\sim}{\longrightarrow}}%
1_{\mathsf{P}}\underset{\mathsf{P}}{\otimes}\mathcal{D}(G^{\prime
})\underset{g_{\mathcal{D}(G^{\prime})}}{\overset{\sim}{\longleftarrow}%
}\mathcal{D}(G^{\prime}) \label{l_CDetFunc1}%
\end{equation}
and the natural map $\mathcal{D}(f)\colon\mathcal{D}(G)\overset{\sim
}{\rightarrow}\mathcal{D}(G^{\prime})$ agree. We further require that
$\mathcal{D}(f^{-1})$ agrees with a variant of Equation \ref{l_CDetFunc1}
using $\Sigma_{r}$ instead of $\Sigma_{l}$.

\item If a two-step filtration $G_{1}\hookrightarrow G_{2}\hookrightarrow
G_{3}$ is given, we demand that the diagram%
\begin{equation}%
\xymatrix{
\mathcal{D}(G_3) \ar[r]^-{\sim} \ar[d]_{\sim} & \mathcal{D}(G_1) \otimes
\mathcal{D}(G_3/G_1) \ar[d]^{\sim} \\
\mathcal{D}(G_2) \otimes\mathcal{D}(G_3/G_2) \ar[r]_-{\sim} & \mathcal
{D}(G_1) \otimes\mathcal{D}(G_2/G_1) \otimes\mathcal{D}(G_3/G_2)
}
\label{l_CDetA}%
\end{equation}
commutes.

\item Given objects $G,G^{\prime}\in\mathsf{C}$ consider the exact sequences%
\[
\Sigma_{1}\colon G\hookrightarrow G\oplus G^{\prime}\twoheadrightarrow
G^{\prime}\qquad\text{and}\qquad\Sigma_{2}\colon G^{\prime}\hookrightarrow
G\oplus G^{\prime}\twoheadrightarrow G
\]
with the natural inclusion and projection morphisms. Then the diagram%
\[%
\xymatrix{
& \mathcal{D}(G \oplus G^{\prime}) \ar[dl]_{\mathcal{D}(\Sigma_1)}
\ar[dr]^{\mathcal{D}(\Sigma_2)} \\
\mathcal{D}(G) \otimes\mathcal{D}(G^{\prime}) \ar[rr]_{s_{G,G^{\prime}}}
& & \mathcal{D}(G^{\prime}) \otimes\mathcal{D}(G)
}%
\]
commutes, where $s_{G,G^{\prime}}$ denotes the symmetry constraint of
$\mathsf{P}$.
\end{enumerate}
\end{definition}

At the end of \cite[\S 4.3]{MR902592}, Deligne considers the \emph{category of
determinant functors} $\det(\mathsf{C},\mathsf{P})$:

\begin{enumerate}
\item objects are determinant functors in the sense of the above definition, and

\item morphisms are natural transformations of determinant functors.
\end{enumerate}

Details can be found spelled out in \cite[\S 2.3]{MR2842932}, especially a
full description of a morphism of determinant functors is \cite[Definition
2.5]{MR2842932}. We also took over his notation $\det(\mathsf{C},\mathsf{P})$
for this category.

\begin{definition}
\label{def_UnivDetFunctor}A determinant functor $\mathcal{D\colon}%
\mathsf{C}^{\times}\longrightarrow\mathsf{P}$ is called \emph{universal} if
for every given Picard groupoid $\mathsf{P}^{\prime}$ the functor%
\[
\operatorname*{Hom}\nolimits^{\otimes}(\mathsf{P},\mathsf{P}^{\prime
})\longrightarrow\det\left(  \mathsf{C},\mathsf{P}^{\prime}\right)
\,\text{,}\qquad\varphi\mapsto\varphi\circ\mathcal{D}%
\]
is an equivalence of categories.
\end{definition}

This is in Deligne \cite[\S 4.3]{MR902592}, but perhaps a little more detailed
in \cite[\S 4.1]{MR2842932}.

For an LCA group $G$, we write $C_{c}(G,\mathbb{R})$ to denote the (possibly
non-unital) Banach algebra of continuous real-valued functions with compact support.

The following seems to be known among all specialists, but we are not aware of
any detailed account in the literature:

\begin{definition}
Write%
\[
Ha\colon\mathsf{LCA}^{\times}\longrightarrow\mathsf{Tors}(\mathbb{R}%
_{>0}^{\times})
\]
for the determinant functor sending an LCA group to its set of Haar measures.

\begin{enumerate}
\item This is a left $\mathbb{R}_{>0}^{\times}$-torsor by multiplying the Haar
measure with a positive real scalar.

\item For any exact sequence $\Sigma\colon G^{\prime}\hookrightarrow
G\twoheadrightarrow G^{\prime\prime}$ in $\mathsf{LCA}$, the isomorphism%
\begin{equation}
Ha(\Sigma)\colon Ha(G)\overset{\sim}{\longrightarrow}Ha(G^{\prime})\otimes
Ha(G^{\prime\prime}) \label{lg_1}%
\end{equation}
in $\mathsf{Tors}(\mathbb{R}_{>0}^{\times})$ is the inverse of the following
construction: Given Haar measures $\mu_{G^{\prime}}$ and $\mu_{G^{\prime
\prime}}$ on the closed subgroup $G^{\prime}$, and quotient group
$G^{\prime\prime}$, there is a unique normalization of the Haar measure on $G$
such that%
\begin{equation}
\int_{G^{\prime\prime}}\int_{G^{\prime}}f(x\xi)d\mu_{G^{\prime}}(\xi
)d\mu_{G^{\prime\prime}}(\overline{x})=\int_{G}f(x)d\mu(x) \label{lg_2}%
\end{equation}
holds for all $f\in C_{c}(G,\mathbb{R})$.

\item For the zero group $\{0\}$, $z\colon\mathcal{D}(\{0\})\overset{\sim
}{\rightarrow}1_{\mathsf{P}}$ is the choice of multiples of the counting
measure $\mu_{\{0\}}(\{0\})=r\cdot1$ for $r\in\mathbb{R}_{>0}^{\times}$.
\end{enumerate}
\end{definition}

We supply some details: (1) The existence and uniqueness up to positive
scalars of Haar measures can for example be found in \cite[Theorem 2.10,
Theorem 2.20]{MR3444405} or \cite[Theorem 29C and D]{MR54173}. This defines
$Ha$ on objects. Given an arrow $G^{\prime}\overset{F}{\longrightarrow}G$ in
$\mathsf{LCA}^{\times}$, i.e., an isomorphism (as we had switched to the
maximal inner groupoid of the category), the pushforward measure%
\[
Ha(G^{\prime})\longrightarrow Ha(G^{\prime})\qquad\qquad\mu\longmapsto
F_{\ast}\mu
\]
with $(F_{\ast}\mu)(X):=\mu(F^{-1}(X))$ is also a Haar measure: This is true
because (a), since the Borel $\sigma$-algebra is generated by open sets and
$F$ is continuous, $F^{-1}(X)$ of a measurable set $X\subseteq G$ is also
measurable in $G^{\prime}$, (b) $F^{-1}(X+g)=F^{-1}(X)+F^{-1}(g)$, so the
translation-invariance of $\mu$ implies that $F_{\ast}\mu$ is
translation-invariant, (c) the properties to be a finite measure on compact
sets, to be inner and outer regular, all just hinge on inclusion properties of
open or measurable sets, and since $F$ is a homeomorphism, these can all be
transported back and forth along $F$ and $F^{-1}$.

Since $F_{\ast}\mu$ is also a Haar measure, it pins down a unique element of
$Ha(G)$. This defines $Ha$ on morphisms.

(2) The integral in Eq. \ref{lg_2} requires justification: The function%
\[
x\mapsto\int_{G^{\prime}}f(x\xi)d\mu_{G^{\prime}}(\xi)
\]
is constant on each coset of the closed subgroup $G^{\prime}$ in $G$. Hence,
it defines a well-defined function on $G^{\prime\prime}$ by taking $x$ in Eq.
\ref{lg_2} to be any preimage of $\overline{x}\in G^{\prime\prime}$ in $x$.
Now use \cite[Theorem 2.49]{MR3444405}, using that the modular character
$\triangle$ (\cite[\S 2.4]{MR3444405}) is trivial for abelian groups as there
cannot be a difference between left and right Haar measures. A similar
discussion can be found in \cite[\S 33]{MR54173}. Axiom (4) of Def.
\ref{def_DeterminantFunctor} follows form a triple integral version of Eq.
\ref{lg_2},%
\begin{align*}
\int_{G_{3}}f(x)d\mu(x)  &  =\int_{G_{3}/G_{1}}\int_{G_{1}}f(x\xi)d\mu_{G_{1}%
}(\xi)d\mu_{G_{3}/G_{1}}(\overline{x})\text{ (by Eq. \ref{lg_2})}\\
&  =\int_{G_{3}/G_{2}}\int_{G_{2}/G_{1}}\int_{G_{1}}f(x\xi\zeta)d\mu_{G_{1}%
}(\xi)d\mu_{G_{2}/G_{1}}(\overline{\zeta})d\mu_{G_{3}/G_{2}}(\overline{x})
\end{align*}
by another use of Eq. \ref{lg_2}, where $x$ is a lift of $\overline{x}\in
G_{3}/G_{2}$ to $G_{3}$, and $\zeta$ is a lift of $\overline{\zeta}\in
G_{2}/G_{1}$ to $G_{2}$, where by $G_{2}\subseteq G_{3}$ it can be regarded an
element in $G_{3}$. Contracting the inner integrals along Eq. \ref{lg_2},%
\[
=\int_{G_{3}/G_{2}}\int_{G_{2}}f(x\zeta)d\mu_{G_{2}}(\zeta)d\mu_{G_{3}/G_{2}%
}(\overline{x})\text{,}%
\]
so that the left side corresponds to the upper left vertex in Diagram
\ref{l_CDetA} and the right sides of the three lines of this computation
correspond to the three remaining vertices.

Axiom (5) of Def. \ref{def_DeterminantFunctor} is harmless since the symmetry
constraint of $\mathsf{Tors}(\mathbb{R}_{>0}^{\times})$ is the identity.

\begin{example}
A distinctive feature of the Haar measure is its compatibility with exact
sequences for \emph{all} LCA groups, may they be discrete, compact, real or
$p$-adic, across all primes. This makes it well-defined even on the derived
category $\operatorname*{D}\nolimits_{\infty}^{b}(\mathsf{LCA})$
\cite{MR1914072}, \cite[\S 1.3, Corollary 2.1.1]{MR3302579}. For example,
multiplication by $5$ acts on%
\[
Ha(\mathbb{Q}_{5}^{A}\oplus\mathbb{R}^{B}\oplus\mathbb{Q}_{3}^{C}%
)\qquad\text{for any}\qquad A,B,C\in\mathbb{Z}_{\geq0}%
\]
by multiplication with $5^{B-A}$ since it stretches volumes in the reals,
shrinks them in the $5$-adics, and is volume-preserving on $\mathbb{Q}_{3}$.
Exactly whenever $A=B$, the map is volume-preserving as a whole. Writing the
same objects as cones%
\[
\mathbb{Q}_{5}^{A}\cong\operatorname*{cone}\left[  (\mathbb{Q}_{5}%
/\mathbb{Z}_{5})^{A}\longrightarrow\Sigma\mathbb{Z}_{5}^{A}\right]
\qquad\qquad\mathbb{R}^{B}\cong\operatorname*{cone}\left[  \mathbb{T}%
^{B}\longrightarrow\Sigma\mathbb{Z}^{B}\right]  \text{,}%
\]
multiplication by $5$ has a kernel of order $5^{A}$ on the $\mathbb{Q}%
_{5}/\mathbb{Z}_{5}$-summands, and a cokernel of order $5^{A}$ on the $\Sigma
$-shift of the $\mathbb{Z}_{5}$-summands (so that the total map has cone zero,
as it is necessary for an automorphism), resp. a kernel of order $5^{B}$ on
the $\mathbb{T}$-summands (where the map is a degree $5^{B}$ covering space)
and a cokernel of order $5^{B}$ on the $\Sigma$-shift of the $\mathbb{Z}%
$-summands. And on the $\operatorname*{cone}\left[  \mathbb{Q}_{3}%
/\mathbb{Z}_{3}\longrightarrow\Sigma\mathbb{Z}_{3}\right]  $ multiplication by
$5$ is an invertible map in both terms as $5\in\mathbb{Z}_{3}^{\times}$ is a unit.
\end{example}

\begin{example}
The graded determinant lines%
\begin{align*}
\det\colon\mathsf{Vect}_{fd}(F)  &  \longrightarrow\operatorname*{Pic}%
\nolimits_{F}^{\mathbb{Z}}\qquad\text{(}\mathsf{Vect}_{fd}(F)\text{ are
finite-dimensional }F\text{-vector spaces)}\\
V  &  \longmapsto\left(  \bigwedge\nolimits^{\dim V}V,\dim V\right)
\end{align*}
for a field $F$ can be regarded as being defined on full subcategories of
$\mathsf{LCA}$ in the cases where $F$ is a locally compact
field\footnote{Famously, this means that $F$ must be a finite extension of
$\mathbb{Q}_{p}$, $\mathbb{R}$, $\mathbb{F}_{p}((t))$ or a discrete field. All
finite-dimensional vector spaces over these fields are naturally objects in
$\mathsf{LCA}$.}. These can be richer than the Haar measure, e.g., the two
different pushforwards of any chosen trivialization of the $p$-adic
determinant line along the two arrows in Eq. \ref{l_h_1} would differ by
$\log_{p}(\ast)$, a possibly transcendental $p$-adic value. However, this
functor does not extend to all of $\mathsf{LCA}$. For example, there is no
sensible way to implement the exact sequence functoriality of Eq. \ref{lrio1}
to $\mathbb{Q}\hookrightarrow\mathbb{A}\twoheadrightarrow\mathbb{Q}^{\vee}$,
where $\mathbb{A}$ are the ad\`{e}les of $\mathbb{Q}$. It would require
intermingling real and $p$-adic determinant lines. The Haar measure
accomplishes this compatibility across all types over objects in
$\mathsf{LCA}$, but at the price of being less precise. For example, it is
oblivious to all the dimension gradings, and it \emph{has} to be since, for
example in $\mathbb{Q}\hookrightarrow\mathbb{A}\twoheadrightarrow
\mathbb{Q}^{\vee}$, $\mathbb{A}$ has a $1$-dimensional real number summand,
but neither $\mathbb{Q}$ nor $\mathbb{Q}^{\vee}$ has. See \cite[Prop.
13.3]{MR4028830} for details on what data is forgotten under switching to the
Haar measure.
\end{example}

\section{The rationalized Haar measure\label{sect_RationalizedHaarMeasure}}

In this section we describe the rationalized Haar measure as it occurs in
Theorem \ref{thm_RationalityOfHaarTorsor}. The basic question is:\ How can we
attach a Haar measure to a group such that it is well-defined up to a
\textit{rational} factor?\medskip

\textbf{Step 1:} Given an object $X\in\mathsf{LCA}_{\operatorname*{vf}}$, pick
a compact open subgroup $C\subseteq X$. This involves a choice, generally, but
is always possible by Prop. \ref{prop_struct}. This choice induces an exact
sequence%
\begin{equation}
\Sigma\colon C\hookrightarrow X\twoheadrightarrow X/C \label{lg_3}%
\end{equation}
in $\mathsf{LCA}$ with $X/C$ discrete (since $C$ was \textit{open} in $X$). On
the compact group, we may pick the canonical normalized Haar measure $\mu_{C}$
such that $\mu_{C}(C)=1$. This is possible because a Haar measure, by
definition, assigns a \textit{finite} volume to compact sets. On the discrete
group, we may pick the canonical counting measure, i.e., $\mu_{X/C}%
(\{\ast\})=1$ for any singleton set. This is tautologically a
translation-invariant measure. By Eq. \ref{lg_1} the exact sequence $\Sigma$
induces a natural isomorphism%
\[
Ha(\Sigma)\colon Ha(X)\overset{\sim}{\longrightarrow}Ha(C)\otimes Ha(X/C)
\]
and we define the\emph{ root measure} $\mu_{\operatorname*{root}}%
^{C}:=Ha(\Sigma)^{-1}(\mu_{C}\otimes\mu_{X/C})$. Equivalently, this the
\textit{unique} normalization of a Haar measure on $X$ such that Eq.
\ref{lg_2} holds, given that we use the measures $\mu_{C}$ and $\mu_{X/C}$ on
the compact and discrete piece.

(This might sound more complicated than it is: If we start with an arbitrary
Haar measure on $X$, we just need to rescale it to give $C$ the volume $+1$.
This yields exactly the measure just described).

\textbf{Step 2:} Now define%
\begin{align*}
Ha^{\mathbb{Q}}\colon\mathsf{LCA}_{\operatorname*{vf}}^{\times}  &
\longrightarrow\mathsf{Tors}(\mathbb{Q}_{>0}^{\times})\\
X  &  \longmapsto\mathbb{Q}_{>0}^{\times}\cdot\mu_{\operatorname*{root}}%
^{C}\text{,}%
\end{align*}
i.e., $Ha^{\mathbb{Q}}$ sends $X$ to the $\mathbb{Q}_{>0}^{\times}$-torsor of
all positive rational multiples of the root measure. We need to check that
this is well-defined: The root measure only depended on the choice of $C$ in
Eq. \ref{lg_3}. If we pick a further compact open $C^{\prime}$ such that
$C^{\prime}\subseteq C$, then $[C:C^{\prime}]:=\#(C/C^{\prime})$ is finite
(since $C$ is compact and $C^{\prime}$ open, so $C/C^{\prime}$ must be both
compact and discrete). We compute%
\begin{align*}
\int_{X}f(x)d\mu_{\operatorname*{root}}^{C}(x)  &  =\int_{X/C}\int_{C}%
f(x\xi)d\mu_{C}(\xi)d\mu_{X/C}(\overline{x})\\
&  =\sum_{\overline{x}\in X/C}\sum_{c\in C/C^{\prime}}\int_{C^{\prime}}f(x\xi
c)d\mu_{C}(\xi)\\
&  =\frac{1}{[C:C^{\prime}]}\sum_{\overline{x}\in X/C}\sum_{c\in C/C^{\prime}%
}\int_{C^{\prime}}f(x\xi c)d\mu_{C^{\prime}}(\xi)\\
&  =\frac{1}{[C:C^{\prime}]}\sum_{\overline{x}\in X/C^{\prime}}\int%
_{C^{\prime}}f(x\xi)d\mu_{C^{\prime}}(\xi)\\
&  =\frac{1}{[C:C^{\prime}]}\int_{X/C^{\prime}}\int_{C^{\prime}}f(x\xi
)d\mu_{C^{\prime}}(\xi)d\mu_{X/C^{\prime}}(\overline{x})\\
&  =\frac{1}{[C:C^{\prime}]}\int_{X}f(x)d\mu_{\operatorname*{root}}%
^{C^{\prime}}(x)
\end{align*}
for any function $f\in C_{c}(X,\mathbb{R})$, where the equalities are, in
succession, (1) Eq. \ref{lg_2} for $\mu_{\operatorname*{root}}^{C}$, (2)
$\mu_{X/C}$ is the counting measure, (3) $\mu_{C}=\frac{1}{[C:C^{\prime}]}%
\mu_{C^{\prime}}$ by our normalization that $\mu_{C}(C)=1$, and analogously
for $C^{\prime}$, (4) $C$ decomposes into $C^{\prime}$-tiles indexed over
$C/C^{\prime}$, (5) $\mu_{X/C^{\prime}}$ is the counting measure and (6) Eq.
\ref{lg_2} for $\mu_{\operatorname*{root}}^{C^{\prime}}$.

We learn that the two choices of the root measure only differ by a rational
factor, so both choices pin down the same $\mathbb{Q}_{>0}^{\times}$-subtorsor
$\mathbb{Q}_{>0}^{\times}\cdot\mu_{\operatorname*{root}}^{C}$ inside $Ha(X)$.
An entirely analogous argument works for compact objects $C^{\prime}$ such
that $C\subseteq C^{\prime}$ is bigger than the previous choice. All in all, a
zig-zag argument using that any two choices $C,C^{\prime}$ of compact opens in
$X$ have a joint compact sub-open, for example $C^{\prime}\cap C$, proves that
$Ha^{\mathbb{Q}}$ is well-defined on objects in $\mathsf{LCA}%
_{\operatorname*{vf}}$.\footnote{alternatively: instead of working with a
common subobject, it is also possible to work with $C+C^{\prime}$, a compact
open containing both constituents}

\begin{example}
The definition of $Ha^{\mathbb{Q}}$ cannot be extended to all of
$\mathsf{LCA}$: The group $\mathbb{R}$ does not possess a compact open
subgroup, so already Step 1 fails.
\end{example}

\begin{example}
$Ha^{\mathbb{Q}}\left(  \mathbb{F}_{q}((t))\right)  $ is the subset of all
Haar measures on the LCA\ group $\mathbb{F}_{q}((t))$ with the property that
$\operatorname*{vol}\left(  \mathbb{F}_{q}[[t]]\right)  $ is a positive
rational number.
\end{example}

It remains to show that this definition, so far only on objects, really
extends to a determinant functor satisfying all the axioms of Definition
\ref{def_DeterminantFunctor}.

Below, write $i\colon\mathbb{Q}_{>0}^{\times}\rightarrow\mathbb{R}%
_{>0}^{\times}$ for the inclusion of abelian groups and $i_{\ast}$ for the
induced basechange of Picard groupoids (Eq. \ref{lrio0}).

\begin{theorem}
\label{thm_HaQCharacterization}$Ha^{\mathbb{Q}}$ extends to a determinant
functor on $\mathsf{LCA}_{\operatorname*{vf}}$ such that the composition of
functors%
\[
\mathsf{LCA}_{\operatorname*{vf}}^{\times}\overset{Ha^{\mathbb{Q}%
}}{\longrightarrow}\mathsf{Tors}(\mathbb{Q}_{>0}^{\times})\overset{i_{\ast
}}{\longrightarrow}\mathsf{Tors}(\mathbb{R}_{>0}^{\times})
\]
agrees with the restriction of the Haar measure to the full subcategory
$\mathsf{LCA}_{\operatorname*{vf}}\subset\mathsf{LCA}$. As a result
$Ha^{\mathbb{Q}}$ can be described as follows: On every object $X$,
$Ha^{\mathbb{Q}}(X)$ singles out a subset of Haar measures in $Ha(X)$, and
under all arrows in $\mathsf{LCA}_{\operatorname*{vf}}$, the natural maps of
the Haar determinant functor $Ha$ respect these subsets.
\end{theorem}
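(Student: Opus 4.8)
The plan is to verify that the object-level assignment $X \mapsto Ha^{\mathbb{Q}}(X)$ (already shown to be well-defined in Steps 1--2) extends to a full determinant functor, and that postcomposing with the basechange $i_{\ast}$ recovers the ordinary Haar measure. The key observation making this tractable is that we already have a genuine determinant functor $Ha$ on all of $\mathsf{LCA}$, and $Ha^{\mathbb{Q}}(X)$ is by construction a $\mathbb{Q}_{>0}^{\times}$-subtorsor sitting inside the $\mathbb{R}_{>0}^{\times}$-torsor $Ha(X)$. So the entire burden is to show that every structural isomorphism supplied by $Ha$ --- the pushforward along isomorphisms, the exact-sequence comparison $Ha(\Sigma)$ of Eq.~\ref{lg_1}, and the zero-object trivialization $z$ --- actually carries the designated rational subtorsor on the source into the designated rational subtorsor on the target. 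Once this compatibility is established, the axioms (1)--(5) of Definition~\ref{def_DeterminantFunctor} for $Ha^{\mathbb{Q}}$ are \emph{inherited for free} from the already-verified axioms for $Ha$, since restricting isomorphisms of $\mathbb{R}_{>0}^{\times}$-torsors to $\mathbb{Q}_{>0}^{\times}$-subtorsors preserves all commuting diagrams.

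\textbf{First} I would treat the pushforward along an isomorphism $F\colon X \overset{\sim}{\to} X'$ in $\mathsf{LCA}_{\operatorname{vf}}$. Given a compact open $C \subseteq X$, its image $F(C) \subseteq X'$ is again compact open (as $F$ is a homeomorphism), and $F$ restricts to an isomorphism $C \overset{\sim}{\to} F(C)$ of compact groups. Since $\mu_{C}(C) = 1$ and the pushforward $F_{\ast}\mu_{C}$ satisfies $(F_{\ast}\mu_{C})(F(C)) = \mu_{C}(C) = 1$, the pushforward of the normalized measure on $C$ is the normalized measure on $F(C)$; similarly $F$ identifies $X/C$ with $X'/F(C)$ compatibly with the counting measures. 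By naturality of $Ha(\Sigma)$ in morphisms of exact sequences, $F_{\ast}$ sends $\mu_{\operatorname{root}}^{C}$ to $\mu_{\operatorname{root}}^{F(C)}$, hence sends the rational subtorsor to the rational subtorsor. \textbf{Next}, for an exact sequence $\Sigma\colon G' \hookrightarrow G \twoheadrightarrow G''$ in $\mathsf{LCA}_{\operatorname{vf}}$, I would choose compact opens $C' \subseteq G'$ and $C'' \subseteq G''$, observe that one can arrange a compact open $C \subseteq G$ with $C \cap G' $ and the image of $C$ in $G''$ comparable (up to finite index) with $C'$ and $C''$ respectively --- here the finite indices are exactly the source of rational, not real, discrepancies --- and then run the same triple-integral bookkeeping as in Step~2 to check that $Ha(\Sigma)$ matches $\mu_{\operatorname{root}}^{C}$ with $\mu_{\operatorname{root}}^{C'} \otimes \mu_{\operatorname{root}}^{C''}$ up to an element of $\mathbb{Q}_{>0}^{\times}$.

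\textbf{The identity with the Haar measure} after basechange is then essentially a tautology: $i_{\ast} Ha^{\mathbb{Q}}(X)$ is the $\mathbb{R}_{>0}^{\times}$-torsor generated by $\mu_{\operatorname{root}}^{C}$, which is all of $Ha(X)$ since $\mathbb{R}_{>0}^{\times}$ acts transitively on Haar measures; and the structural maps agree because those of $Ha^{\mathbb{Q}}$ were defined as the restrictions of those of $Ha$. I would record the final description in the statement --- that $Ha^{\mathbb{Q}}(X)$ singles out a subset of $Ha(X)$ preserved by all the $Ha$-maps --- as a direct consequence.

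\textbf{The main obstacle} I expect is the exact-sequence step, specifically the claim that one can choose mutually compatible compact opens $C, C', C''$ in the three terms of $\Sigma$. Unlike the split case, a general admissible subobject $G' \hookrightarrow G$ need not carry $C \cap G'$ to a compact open of the prescribed form, and the image of a compact open of $G$ in $G''$ must be checked to be compact open there (which uses that the epic is a closed surjection). The delicate point is purely that the comparison introduces only finite indices $[C : C \cap G']$ and $[\,\overline{C} : \text{image}\,]$, so that the measure-theoretic identity \emph{up to a rational scalar} is forced; the well-definedness argument of Step~2, being a zig-zag over common refinements, is precisely the tool that absorbs these finite indices into $\mathbb{Q}_{>0}^{\times}$ and thereby guarantees independence of all the auxiliary choices.
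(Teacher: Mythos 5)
Your proposal is correct, but it takes a genuinely different route from the paper. You verify the theorem by hand: you show that each structural isomorphism of the Haar determinant functor $Ha$ (pushforward along isomorphisms, the maps $Ha(\Sigma)$ of Eq. \ref{lg_1}, the zero-object trivialization) carries the rational subtorsor $\mathbb{Q}_{>0}^{\times}\cdot\mu_{\operatorname*{root}}^{C}$ into the corresponding rational subtorsor, after which the axioms of Definition \ref{def_DeterminantFunctor} restrict from $Ha$ for free. The paper instead never verifies anything directly: it invokes Proposition \ref{prop_1} (which rests on the $K$-theory computation of Theorem \ref{thm1}) to know that Deligne's universal determinant functor $u$ lands in $\mathsf{Tors}(\mathbb{Q}_{>0}^{\times})$ with $i_{\ast}\circ u=Ha\mid_{\mathsf{LCA}_{\operatorname*{vf}}}$, and then uses the decorated category $\mathsf{LCA}_{\operatorname*{vf}}^{\operatorname*{dec}}$ of pairs $(X,C)$ to match $u$ with the root-measure recipe of \S\ref{sect_RationalizedHaarMeasure}; the determinant-functor structure on $Ha^{\mathbb{Q}}$ is thus inherited from $u$ rather than from $Ha$. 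Your approach is more elementary and self-contained (pure measure theory, no $K$-theory), and your anticipated ``main obstacle'' in fact dissolves: for an exact sequence $G'\hookrightarrow G\overset{\pi}{\twoheadrightarrow}G''$ you should start from a compact open $C\subseteq G$ and take $C':=C\cap G'$ (closed in the compact $C$, open in $G'$) and $C'':=\pi(C)$ (compact image, open since $\pi$ is an open map); then the Fubini formula applied to the indicator $\mathbf{1}_{C}\in C_{c}(G,\mathbb{R})$ gives $Ha(\Sigma)(\mu_{\operatorname*{root}}^{C})=\mu_{\operatorname*{root}}^{C'}\otimes\mu_{\operatorname*{root}}^{C''}$ \emph{exactly}, with no finite-index correction; the finite indices enter only through the Step-2 independence of choices, exactly as you say. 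What the paper's heavier route buys is the stronger conclusion that $Ha^{\mathbb{Q}}$ \emph{is} the universal determinant functor, which is precisely what the proof of Theorem \ref{thm_main} cites; with your proof of the present statement, Theorem \ref{thm_main} would still require Proposition \ref{prop_1} together with a short argument identifying your $Ha^{\mathbb{Q}}$ with $u$ (e.g., comparing the induced automorphism of $\mathbb{Q}_{>0}^{\times}$ on $\pi_{1}$ through the inclusion into $\mathbb{R}_{>0}^{\times}$), so the $K$-theoretic input is deferred rather than eliminated.
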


This is a self-contained description of $Ha^{\mathbb{Q}}$ which saves us the
work to formulate the remaining axioms in Definition
\ref{def_DeterminantFunctor} for $Ha^{\mathbb{Q}}$:\ They all agree with the
ones for the usual Haar measure, just restricted to a subset of values. We
defer the proof to \S \ref{sect_ProofOfMainTheorem}.

\section{Computations}

\begin{lemma}
\label{lemma_p_1}The inclusion of finite abelian groups $\mathsf{Ab}%
_{\operatorname*{fin}}$ into all abelian groups $\mathsf{Ab}$ induces a
Verdier localization sequence%
\[
\operatorname*{D}\nolimits_{\infty}^{b}(\mathsf{Ab}_{\operatorname*{fin}%
})\longrightarrow\operatorname*{D}\nolimits_{\infty}^{b}(\mathsf{Ab}%
)\longrightarrow\operatorname*{D}\nolimits_{\infty}^{b}(\mathsf{Ab}%
/\mathsf{Ab}_{\operatorname*{fin}})\text{.}%
\]

\end{lemma}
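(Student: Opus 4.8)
The plan is to recognize this as an instance of the standard localization theorem for bounded derived categories along a Serre subcategory, with a single non-formal point to verify.

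First I would observe that $\mathsf{Ab}_{\operatorname*{fin}}$ is a Serre subcategory of the abelian category $\mathsf{Ab}$: it is closed under subobjects and quotients (a subgroup or quotient of a finite group is finite), and it is closed under extensions, since if $A'\hookrightarrow A\twoheadrightarrow A''$ is exact with $A',A''$ finite then $\#A=\#A'\cdot\#A''$ is finite. Because $\mathsf{Ab}$ is abelian, every monomorphism and epimorphism is admissible, so all factorizations through images that one needs are available for free; in the terminology of Schlichting this automatically exhibits $\mathsf{Ab}_{\operatorname*{fin}}$ as an $s$-filtering (in fact two-sided) subcategory.

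Next I would invoke the localization theorem for bounded derived $\infty$-categories along such a subcategory (Schlichting's ``Negative $K$-theory of derived categories'', and in the purely abelian form the results going back to Miyachi, Keller and Neeman). This produces a Verdier sequence in which $\operatorname*{D}\nolimits_{\infty}^{b}(\mathsf{Ab})$ localizes onto $\operatorname*{D}\nolimits_{\infty}^{b}(\mathsf{Ab}/\mathsf{Ab}_{\operatorname*{fin}})$ (possibly up to idempotent completion, which is irrelevant for the localizing invariants at issue), with kernel the full subcategory $\operatorname*{D}\nolimits_{\infty,\operatorname*{fin}}^{b}(\mathsf{Ab})$ of complexes all of whose cohomology groups are finite. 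It then remains only to identify this kernel with $\operatorname*{D}\nolimits_{\infty}^{b}(\mathsf{Ab}_{\operatorname*{fin}})$ via the natural comparison functor.

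The one genuine content is this last identification, i.e. full faithfulness of $\operatorname*{D}\nolimits_{\infty}^{b}(\mathsf{Ab}_{\operatorname*{fin}})\to\operatorname*{D}\nolimits_{\infty}^{b}(\mathsf{Ab})$; essential surjectivity onto the kernel is a truncation argument, as every finite-cohomology complex is an iterated extension of shifts of its finite cohomology groups. Full faithfulness amounts to showing that for finite $M,N$ the Yoneda map $\operatorname*{Ext}^{i}_{\mathsf{Ab}_{\operatorname*{fin}}}(M,N)\to\operatorname*{Ext}^{i}_{\mathsf{Ab}}(M,N)$ is an isomorphism for all $i$. For $i=0$ this is trivial, and for $i=1$ it holds because the middle term of an extension of $M$ by $N$ is automatically finite. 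For $i\geq 2$ the target vanishes, since $\mathbb{Z}$ has global dimension $1$, so I must show the source vanishes as well. Here is where I expect the only real work: given a $2$-extension $0\to N\to E_{1}\to E_{0}\to M\to 0$ with all terms finite, set $K:=\operatorname*{im}(E_{1}\to E_{0})$, split it into classes $\xi\in\operatorname*{Ext}^{1}(K,N)$ and $\eta\in\operatorname*{Ext}^{1}(M,K)$, and use that the connecting map $\operatorname*{Ext}^{1}(M,K)\to\operatorname*{Ext}^{2}(M,N)$ attached to $\xi$ lands in the zero group. Hence $\eta$ lifts along $E_{1}\twoheadrightarrow K$ to a class in $\operatorname*{Ext}^{1}(M,E_{1})$, and since $E_{1}$ is finite this lift already lives in $\mathsf{Ab}_{\operatorname*{fin}}$ and trivializes the original $2$-extension there; an induction on length then handles $i\geq 3$. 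The crux throughout, and the main obstacle, is simply that finiteness is preserved under every lift and resolution step appearing above, which is exactly what the extension-closure of $\mathsf{Ab}_{\operatorname*{fin}}$ together with the homological dimension one of $\mathbb{Z}$ guarantees.
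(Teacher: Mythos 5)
Your proof is correct, and it is in fact more complete than the paper's own proof, which consists of the single sentence that $\mathsf{Ab}_{\operatorname*{fin}}$ is a Serre subcategory of the abelian category $\mathsf{Ab}$, implicitly appealing to the classical localization theorem for bounded derived categories along Serre quotients. Your write-up follows the same skeleton but correctly isolates the one non-formal point that this one-liner sweeps under the rug: the general theorem (Miyachi) only yields a Verdier sequence whose fiber is $\operatorname*{D}\nolimits_{\infty,\operatorname*{fin}}^{b}(\mathsf{Ab})$, the complexes with finite cohomology groups, and identifying this fiber with $\operatorname*{D}\nolimits_{\infty}^{b}(\mathsf{Ab}_{\operatorname*{fin}})$ genuinely requires the comparison $\operatorname*{Ext}\nolimits_{\mathsf{Ab}_{\operatorname*{fin}}}^{i}(M,N)\cong\operatorname*{Ext}\nolimits_{\mathsf{Ab}}^{i}(M,N)$, which is \emph{not} automatic for Serre subcategories. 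Your argument for this comparison --- agreement in degrees $0,1$ by extension-closure, vanishing of both sides in degrees $\geq2$ using that $\mathbb{Z}$ has global dimension one, together with the observation that the lift of $\eta$ along $E_{1}\twoheadrightarrow K$ furnished by $\operatorname*{Ext}\nolimits_{\mathsf{Ab}}^{2}(M,N)=0$ has finite middle term and hence already lives in $\mathsf{Ab}_{\operatorname*{fin}}$ --- is exactly right, granting the standard facts that Yoneda Ext computes derived-category Homs and admits long exact sequences in any abelian category.

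One side remark in your proof is false, though harmlessly so: a Serre subcategory of an abelian category is \emph{not} automatically $s$-filtering in Schlichting's sense. The filtering half is indeed free (factor any map through its image), but the ``special'' half fails precisely in the present example, in both directions: for the admissible epi $\mathbb{Z}\twoheadrightarrow\mathbb{Z}/n$ there is no finite abelian group mapping to $\mathbb{Z}$ whose composite to $\mathbb{Z}/n$ is still surjective (every homomorphism from a finite group to $\mathbb{Z}$ is zero), and dually no map from $\mathbb{Q}/\mathbb{Z}$ to a finite group restricts injectively to $\mathbb{Z}/n\hookrightarrow\mathbb{Q}/\mathbb{Z}$ (the image would be finite and divisible, hence zero). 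This is one reason the paper, for the analogous statement about $\mathsf{C}\subset\mathsf{LCA}_{\operatorname*{vf}}$ in Lemma \ref{lemma_p_2}, invokes the percolating-subcategory formalism of \cite{hr2} rather than Schlichting's $s$-filtering theorem. Since you cite Miyachi's abelian-category theorem in the same breath, and that theorem covers the quotient step without any $s$-filtering hypothesis, nothing in your proof actually depends on the erroneous claim; you should simply delete it.
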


\begin{proof}
It suffices to note that $\mathsf{Ab}_{\operatorname*{fin}}$ is a\ Serre
subcategory of the abelian category $\mathsf{Ab}$.
\end{proof}

\begin{lemma}
\label{lemma_p_2}The inclusion of compact abelian groups $\mathsf{C}$ into
$\mathsf{LCA}_{\operatorname*{vf}}$ induces a Verdier localization sequence,
up to equivalence of the shape,%
\[
\operatorname*{D}\nolimits_{\infty}^{b}(\mathsf{C})\longrightarrow
\operatorname*{D}\nolimits_{\infty}^{b}(\mathsf{LCA}_{\operatorname*{vf}%
})\longrightarrow\operatorname*{D}\nolimits_{\infty}^{b}(\mathsf{Ab}%
/\mathsf{Ab}_{\operatorname*{fin}})\text{.}%
\]
This equivalence is given by an exact equivalence of exact categories
$\Xi\colon\mathsf{Ab}/\mathsf{Ab}_{\operatorname*{fin}}\overset{\sim
}{\longrightarrow}\mathsf{LCA}_{\operatorname*{vf}}/\mathsf{C}$.
\end{lemma}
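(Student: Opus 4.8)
The plan is to split the statement into two largely independent pieces and treat them in turn: (B) the \emph{existence} of the Verdier localization sequence attached to the inclusion $\mathsf{C}\hookrightarrow\mathsf{LCA}_{\operatorname*{vf}}$, and (A) the \emph{identification} of the third term with $\operatorname*{D}\nolimits_{\infty}^{b}(\mathsf{Ab}/\mathsf{Ab}_{\operatorname*{fin}})$ through an exact equivalence $\Xi\colon\mathsf{Ab}/\mathsf{Ab}_{\operatorname*{fin}}\overset{\sim}{\to}\mathsf{LCA}_{\operatorname*{vf}}/\mathsf{C}$. The genuine content is (A); once $\Xi$ is available, applying $\operatorname*{D}\nolimits_{\infty}^{b}(-)$ to it simply rewrites the target of (B). For (B) I would first record that $\mathsf{C}$ is a fully exact subcategory of $\mathsf{LCA}_{\operatorname*{vf}}$ closed under admissible subobjects, admissible quotients, and extensions: a closed subgroup of a compact group is compact, a continuous quotient of a compact group is compact, an extension of compact by compact is compact, and none of these can acquire an $\mathbb{R}$-summand. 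I would then verify the (left) $s$-filtering condition needed for Schlichting's localization theorem by the concrete factorization: any morphism $f\colon C\to G$ out of a compact group factors as $C\twoheadrightarrow f(C)\hookrightarrow G$, where the image $f(C)$ is compact (continuous image of a compact set) and closed (compact subsets of a Hausdorff group are closed), hence an admissible subobject lying in $\mathsf{C}$. Schlichting's localization theorem then yields the Verdier sequence $\operatorname*{D}\nolimits_{\infty}^{b}(\mathsf{C})\to\operatorname*{D}\nolimits_{\infty}^{b}(\mathsf{LCA}_{\operatorname*{vf}})\to\operatorname*{D}\nolimits_{\infty}^{b}(\mathsf{LCA}_{\operatorname*{vf}}/\mathsf{C})$, possibly only after idempotent completion, which is exactly what ``up to equivalence of the shape'' should cover. (As a sanity check, Pontryagin duality is self-equivalence of $\mathsf{LCA}_{\operatorname*{vf}}$ swapping $\mathsf{C}$ and $\mathsf{Ab}$ and fixing $\mathsf{Ab}_{\operatorname*{fin}}$, so the filtering direction can always be traded for its dual if convenient.)

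To construct $\Xi$ I would use that every discrete group is an object of $\mathsf{LCA}_{\operatorname*{vf}}$, yielding a fully exact inclusion $\iota\colon\mathsf{Ab}\hookrightarrow\mathsf{LCA}_{\operatorname*{vf}}$. Composing with the localization $L\colon\mathsf{LCA}_{\operatorname*{vf}}\to\mathsf{LCA}_{\operatorname*{vf}}/\mathsf{C}$ gives an exact functor $L\iota$ that annihilates $\mathsf{Ab}_{\operatorname*{fin}}$, since a finite group is compact, hence lies in $\mathsf{C}$. By the universal property of the Serre quotient $\mathsf{Ab}\to\mathsf{Ab}/\mathsf{Ab}_{\operatorname*{fin}}$ this factors through a functor $\Xi$. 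Essential surjectivity of $\Xi$ is immediate from the structure theorem (Prop.\ \ref{prop_struct} in the case $n=0$): any $G\in\mathsf{LCA}_{\operatorname*{vf}}$ admits a compact open $C\subseteq G$, and in the exact quotient the admissible epic $G\twoheadrightarrow G/C$ has kernel $C\in\mathsf{C}$, hence becomes an isomorphism $G\simeq G/C$ onto a discrete group in the image of $\iota$, and thus of $\Xi$.

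The crux is full faithfulness of $\Xi$, i.e.\ matching $\operatorname*{Hom}_{\mathsf{Ab}/\mathsf{Ab}_{\operatorname*{fin}}}(A,B)$ with $\operatorname*{Hom}_{\mathsf{LCA}_{\operatorname*{vf}}/\mathsf{C}}(A,B)$ for discrete $A,B$. Both are computed by a calculus of fractions: the Serre quotient inverts maps with finite kernel and cokernel, whereas Schlichting's quotient inverts admissible monos with cokernel in $\mathsf{C}$ and admissible epics with kernel in $\mathsf{C}$. The observation that reconciles them is that a compact admissible subobject or compact admissible quotient of a \emph{discrete} group is automatically \emph{finite} (a subgroup of a discrete group is discrete, and a compact discrete group is finite). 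Hence on discrete objects the two localizing classes of weak equivalences coincide, the intermediate objects in the fraction description of $\operatorname*{Hom}_{\mathsf{LCA}_{\operatorname*{vf}}/\mathsf{C}}(A,B)$ may be taken discrete, and, since continuous homomorphisms between discrete groups are just group homomorphisms, the colimit defining that Hom-set becomes literally the colimit defining $\operatorname*{Hom}_{\mathsf{Ab}/\mathsf{Ab}_{\operatorname*{fin}}}(A,B)$.

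I expect the main obstacle to be precisely this Hom-comparison: making Schlichting's fraction description explicit enough to see that non-discrete intermediate LCA groups contribute nothing new, and confirming that the two classes of weak equivalences restrict to the \emph{same} class on the discrete subcategory. Once full faithfulness and essential surjectivity are in hand, $\Xi$ is an equivalence of underlying groupoids; that it is an equivalence of \emph{exact} categories follows because a sequence of discrete groups is a conflation in $\mathsf{Ab}$, in $\mathsf{LCA}_{\operatorname*{vf}}$, and in either quotient under the same condition, so exactness is reflected in both directions. Finally, applying $\operatorname*{D}\nolimits_{\infty}^{b}(-)$ to $\Xi$ and substituting into the localization sequence of step (B) delivers the claimed sequence with third term $\operatorname*{D}\nolimits_{\infty}^{b}(\mathsf{Ab}/\mathsf{Ab}_{\operatorname*{fin}})$.
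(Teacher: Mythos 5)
Your construction of $\Xi$ (discrete topology functor, factored through the Serre quotient by its universal property), your essential surjectivity argument via Prop.~\ref{prop_struct}, and your key observation that a group which is both discrete and compact is finite are all essentially the paper's own proof. The genuine gap is in your step (B): the appeal to Schlichting's localization theorem. Schlichting's hypothesis is not ``left filtering'' but ``left \emph{s}-filtering'', i.e.\ left filtering \emph{plus} the left special condition: for every admissible epic $U\twoheadrightarrow A$ with $A\in\mathsf{C}$ there must exist $B\in\mathsf{C}$ and a morphism $B\to U$ whose composite $B\to U\twoheadrightarrow A$ is still an admissible epic. Your factorization $C\twoheadrightarrow f(C)\hookrightarrow G$ verifies only the filtering half, and the special half genuinely fails for $\mathsf{C}\subset\mathsf{LCA}_{\operatorname*{vf}}$: take $\mathbb{Z}\twoheadrightarrow\mathbb{Z}/n$; any morphism from a compact group $B$ to the discrete group $\mathbb{Z}$ has compact, hence finite, hence trivial image, so no composite $B\to\mathbb{Z}\to\mathbb{Z}/n$ is surjective. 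Nor does your Pontryagin duality escape route help: right filtering fails as well, e.g.\ a nontrivial character $\mathbb{Q}_{p}\to\mathbb{T}$ cannot factor through a compact admissible quotient of $\mathbb{Q}_{p}$, since the only compact quotient of $\mathbb{Q}_{p}$ by a closed subgroup is $0$. So $\mathsf{C}$ is neither left nor right s-filtering, Schlichting's theorem is not available, and ``up to idempotent completion'' cannot absorb this -- the failure is in the hypotheses, not a $K_{0}$-type defect.

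This failure is precisely why the paper instead invokes the Henrard--van Roosmalen theory of percolating subcategories \cite{hr2}: $\mathsf{C}$ is \emph{inflation-percolating} in $\mathsf{LCA}_{\operatorname*{vf}}$ (your image factorization is what verifies this axiom), their Prop.~4.4 shows the associated system $S_{\mathsf{C}}$ is left multiplicative -- which is also what licenses your later use of a calculus of fractions (right roofs) for $\operatorname*{Hom}_{\mathsf{LCA}_{\operatorname*{vf}}/\mathsf{C}}$ -- and their Corollary~5.11 produces the Verdier sequence at the derived level. With that substitution, your remaining steps do go through, but note that the reduction you correctly flag as the crux (``intermediate objects in the roofs may be taken discrete'') still requires the paper's explicit device: given a roof through $Z$, choose a compact open $C\subseteq Z$ and compose both legs with the admissible epic $q\colon Z\twoheadrightarrow Z/C$, which lies in $S_{\mathsf{C}}$; this yields an equivalent roof whose apex is discrete, after which discrete-plus-compact-equals-finite identifies the two localizing classes, as you intended.
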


\begin{proof}
Again, we note that $\mathsf{C}$ is a Serre subcategory of $\mathsf{LCA}%
_{\operatorname*{vf}}$. Since $\mathsf{LCA}_{\operatorname*{vf}}$ is not an
abelian category, one needs to verify a few more properties to obtain the
Verdier localization sequence%
\[
\operatorname*{D}\nolimits_{\infty}^{b}(\mathsf{C})\longrightarrow
\operatorname*{D}\nolimits_{\infty}^{b}(\mathsf{LCA}_{\operatorname*{vf}%
})\longrightarrow\operatorname*{D}\nolimits_{\infty}^{b}(\mathsf{LCA}%
_{\operatorname*{vf}}/\mathsf{C})\text{,}%
\]
and one may follow \cite[Corollary 5.11]{hr2}. Hence, our claim is proven once
we exhibit an exact equivalence of exact categories%
\begin{equation}
\Xi\colon\mathsf{Ab}/\mathsf{Ab}_{\operatorname*{fin}}\longrightarrow
\mathsf{LCA}_{\operatorname*{vf}}/\mathsf{C}\text{.} \label{lp_3}%
\end{equation}
To this end, we need to describe the quotient categories on either side. In
the setting of \cite{hr2}, $\mathsf{C}$ is inflation-percolating in
$\mathsf{LCA}_{\operatorname*{vf}}$, so by \cite[Prop. 4.4]{hr2} the system
$S_{\mathsf{C}}$ generated by (1) admissible epics with kernel in $\mathsf{C}%
$, (2) admissible monics with cokernel in $\mathsf{C}$, (3) any finite
composition thereof, is \textit{left} multiplicative\footnote{Unfortunately,
some authors use left and right with opposite meaning in the context of a
calculus of fractions. We follow the convention of Kashiwara--Shapira
\cite[Remark 7.1.8]{MR2182076}.}, so $\mathsf{LCA}_{\operatorname*{vf}%
}/\mathsf{C}$ can be modelled through right roofs \cite[Remark 7.1.7]%
{MR2182076}. We note that for Serre subcategories in abelian categories,
$\mathsf{Ab}/\mathsf{Ab}_{\operatorname*{fin}}=\mathsf{Ab}[S_{\mathsf{Ab}%
_{\operatorname*{fin}}}^{-1}]$ is a localization by a both left and right
multiplicative system. The functor $\Xi$ sends an abelian group to itself,
equipped with the discrete topology. \textit{(Fullness and essential
surjectivity)} Consider an arbitrary right roof in $\mathsf{LCA}%
_{\operatorname*{vf}}/\mathsf{C}=\mathsf{LCA}_{\operatorname*{vf}%
}[S_{\mathsf{C}}^{-1}]$ between discrete abelian groups $X,Y$. It has the
shape of the solid arrows in
\begin{equation}%
{
\begin{tikzcd}
	&& D \\
	\\
	X && D && Y \\
	\\
	&& Z
	\arrow["{f'}", dashed, from=3-1, to=1-3]
	\arrow[dashed, from=3-1, to=3-3]
	\arrow["f"', from=3-1, to=5-3]
	\arrow[equals, dashed, from=3-3, to=1-3]
	\arrow["{s'}"', dashed, from=3-5, to=1-3]
	\arrow["{s'}", dashed, from=3-5, to=3-3]
	\arrow["{s\in S_{\mathsf{C} }}", from=3-5, to=5-3]
	\arrow["q", dashed, two heads, from=5-3, to=3-3]
\end{tikzcd}
}
\label{diag_p_1}%
\end{equation}
(ignore the dashed arrows for the moment). Since $Z\in\mathsf{LCA}%
_{\operatorname*{vf}}$, there must exist a compact clopen $C\subseteq Z$ and
we obtain an exact sequence%
\begin{equation}
C\hookrightarrow Z\overset{q}{\twoheadrightarrow}D \label{l_0d}%
\end{equation}
with $D$ discrete in $\mathsf{LCA}_{\operatorname*{vf}}$ (Prop.
\ref{prop_struct}). Now we may add all the dashed arrows in Diagram
\ref{diag_p_1}: $f^{\prime}:=q\circ f$ and $s^{\prime}:=q\circ s$. Since $s\in
S_{\mathsf{C}}$ and $q$ is an admissible epic with kernel in $\mathsf{C}$, it
follows that $s^{\prime}\in S_{\mathsf{C}}$. As a result, Diagram
\ref{diag_p_1} determines a valid equivalence of right roofs and shows that we
may assume that $Z$ was discrete to start with. It follows that $s\in
S_{\mathsf{C}}$ is a morphism between two discrete groups. However, then we
must have that $s\in S_{\mathsf{Ab}_{\operatorname*{fin}}}$ since finite
groups are the only ones which are simultaneously discrete and compact. It
follows that the right roof lies is the image under the functor $\Xi$ of a
right roof in $\mathsf{Ab}/\mathsf{Ab}_{\operatorname*{fin}}$. Thus, $\Xi$ is
a full functor. For essential surjectivity, note that if $Z\in\mathsf{LCA}%
_{\operatorname*{vf}}$ is an arbitrary object, the same exact sequence as in
Eq. \ref{l_0d} shows that $q\colon Z\simeq\Xi(D)$ is an isomorphism in the
quotient category $\mathsf{LCA}_{\operatorname*{vf}}/\mathsf{C}$.
\textit{(Faithfulness)} Suppose a right roof in $\mathsf{Ab}[S_{\mathsf{Ab}%
_{\operatorname*{fin}}}^{-1}]$ that is equivalent to the zero map in
$\mathsf{Ab}[S_{\mathsf{Ab}_{\operatorname*{fin}}}^{-1}]$. Then since
$S_{\mathsf{Ab}_{\operatorname*{fin}}}\subseteq S_{\mathsf{C}}$, the same
equivalence of roof also shows that the roof is equivalent to the zero map in
$\mathsf{LCA}_{\operatorname*{vf}}/\mathsf{C}$. \textit{(Exactness and
reflection of exactness)} The functor is induced from the functor
$\mathsf{Ab}\longrightarrow\mathsf{LCA}_{\operatorname*{vf}}$, which is
obviously exact, to the quotient category $\mathsf{Ab}/\mathsf{Ab}%
_{\operatorname*{fin}}$ by the universal property of sending $\mathsf{Ab}%
_{\operatorname*{fin}}$ to zero objects. Restricted onto the strict image, it
is clear that $\Xi$ reflects exactness since exactness on discrete groups in
$\mathsf{LCA}_{\operatorname*{vf}}$ reduces to exactness of the underlying
abelian groups. There is no topology to take into consideration.
\end{proof}

\begin{lemma}
\label{lemma_p_3}There is an equivalence of localizing non-commutative motives%
\[
\mathcal{U}^{\operatorname*{loc}}(\mathsf{LCA}_{\operatorname*{vf}%
})\overset{\sim}{\longrightarrow}\Sigma\mathcal{U}^{\operatorname*{loc}%
}(\mathsf{Ab}_{\operatorname*{fin}})\text{.}%
\]

\end{lemma}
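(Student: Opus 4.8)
The plan is to feed the two Verdier localization sequences of Lemmas \ref{lemma_p_1} and \ref{lemma_p_2} into $\mathcal{U}^{\operatorname*{loc}}$. Since $\mathcal{U}^{\operatorname*{loc}}$ is (the universal) localizing invariant, it sends a Verdier localization sequence of stable $\infty$-categories to a cofiber sequence of localizing motives. Applying it to the sequence of Lemma \ref{lemma_p_2} yields a cofiber sequence
\[
\mathcal{U}^{\operatorname*{loc}}(\mathsf{C})\longrightarrow\mathcal{U}^{\operatorname*{loc}}(\mathsf{LCA}_{\operatorname*{vf}})\longrightarrow\mathcal{U}^{\operatorname*{loc}}(\mathsf{LCA}_{\operatorname*{vf}}/\mathsf{C}),
\]
and to the sequence of Lemma \ref{lemma_p_1} a cofiber sequence
\[
\mathcal{U}^{\operatorname*{loc}}(\mathsf{Ab}_{\operatorname*{fin}})\longrightarrow\mathcal{U}^{\operatorname*{loc}}(\mathsf{Ab})\longrightarrow\mathcal{U}^{\operatorname*{loc}}(\mathsf{Ab}/\mathsf{Ab}_{\operatorname*{fin}}).
\]
Once the two middle (resp.\ outer) terms are understood, the statement is a short diagram chase.

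The heart of the argument is to show the two vanishings $\mathcal{U}^{\operatorname*{loc}}(\mathsf{Ab})\simeq0$ and $\mathcal{U}^{\operatorname*{loc}}(\mathsf{C})\simeq0$ by Eilenberg swindles, so that both categories are \emph{flasque} and are therefore annihilated by every localizing invariant. For $\mathsf{Ab}$ I would use the countable-coproduct endofunctor $A\mapsto A^{(\mathbb{N})}$: direct sums are exact in $\mathsf{Ab}$, they preserve boundedness (applied to a fixed object they stay concentrated in a single degree), and $\operatorname*{id}\oplus(-)^{(\mathbb{N})}\cong(-)^{(\mathbb{N})}$ exhibits flasqueness of $\operatorname*{D}\nolimits_{\infty}^{b}(\mathsf{Ab})$. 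For $\mathsf{C}$ I would dualize: by Pontryagin duality $\mathsf{C}\simeq\mathsf{Ab}^{\operatorname*{op}}$, and countable products of compact groups are again compact (Tychonoff) and exact (being dual to countable coproducts in $\mathsf{Ab}$), so the product-swindle endofunctor $A\mapsto A^{\mathbb{N}}$ makes $\operatorname*{D}\nolimits_{\infty}^{b}(\mathsf{C})$ flasque as well.

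With these in hand the assembly is immediate. The first cofiber sequence with $\mathcal{U}^{\operatorname*{loc}}(\mathsf{C})\simeq0$ gives $\mathcal{U}^{\operatorname*{loc}}(\mathsf{LCA}_{\operatorname*{vf}})\overset{\sim}{\longrightarrow}\mathcal{U}^{\operatorname*{loc}}(\mathsf{LCA}_{\operatorname*{vf}}/\mathsf{C})$, and the exact equivalence $\Xi$ of Lemma \ref{lemma_p_2} identifies the right-hand side with $\mathcal{U}^{\operatorname*{loc}}(\mathsf{Ab}/\mathsf{Ab}_{\operatorname*{fin}})$. The second cofiber sequence with $\mathcal{U}^{\operatorname*{loc}}(\mathsf{Ab})\simeq0$ gives $\mathcal{U}^{\operatorname*{loc}}(\mathsf{Ab}/\mathsf{Ab}_{\operatorname*{fin}})\simeq\Sigma\,\mathcal{U}^{\operatorname*{loc}}(\mathsf{Ab}_{\operatorname*{fin}})$, since the cofiber of a map out of a zero object is its suspension. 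Concatenating the three equivalences
\[
\mathcal{U}^{\operatorname*{loc}}(\mathsf{LCA}_{\operatorname*{vf}})\simeq\mathcal{U}^{\operatorname*{loc}}(\mathsf{LCA}_{\operatorname*{vf}}/\mathsf{C})\simeq\mathcal{U}^{\operatorname*{loc}}(\mathsf{Ab}/\mathsf{Ab}_{\operatorname*{fin}})\simeq\Sigma\,\mathcal{U}^{\operatorname*{loc}}(\mathsf{Ab}_{\operatorname*{fin}})
\]
yields the claim.

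The step I expect to be the main obstacle is the rigorous justification of the two swindles \emph{inside the bounded derived $\infty$-categories}: one must check that the chosen countable (co)product endofunctor genuinely lands in $\operatorname*{D}\nolimits_{\infty}^{b}$, is exact, and satisfies the flasqueness identity, and one must invoke the fact that $\mathcal{U}^{\operatorname*{loc}}$ vanishes on flasque categories and converts Verdier sequences to cofiber sequences (Blumberg--Gepner--Tabuada). The exactness of countable products in $\mathsf{C}$ is the point most worth spelling out, but it follows cleanly from the Pontryagin-dual picture, where it becomes exactness of countable coproducts in $\mathsf{Ab}$.
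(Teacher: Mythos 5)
Your proposal is correct and follows essentially the same route as the paper: apply $\mathcal{U}^{\operatorname*{loc}}$ to the two Verdier sequences of Lemmas \ref{lemma_p_1} and \ref{lemma_p_2}, annihilate $\mathcal{U}^{\operatorname*{loc}}(\mathsf{Ab})$ and $\mathcal{U}^{\operatorname*{loc}}(\mathsf{C})$ by Eilenberg swindles (cocompleteness of $\mathsf{Ab}$, resp.\ completeness of $\mathsf{C}$ via Tychonoff), and splice the resulting equivalences through $\Xi$. The paper is merely terser about the swindle, and a subsequent remark there confirms your duality observation that $\mathsf{Ab}$ and $\mathsf{C}$ are each both complete and cocomplete, so either form of the swindle works.
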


There is no urgent need for non-commutative motives, this is only one possible
way to lead us to Corollary \ref{cor_1}, which is all that we shall truly need.

\begin{proof}
From Lemma \ref{lemma_p_1} and Lemma \ref{lemma_p_2} we get the fiber
sequences of localizing non-commutative motives%
\begin{align*}
\mathcal{U}^{\operatorname*{loc}}(\mathsf{Ab}_{\operatorname*{fin}})  &
\longrightarrow\mathcal{U}^{\operatorname*{loc}}(\mathsf{Ab})\longrightarrow
\mathcal{U}^{\operatorname*{loc}}(\mathsf{Ab}/\mathsf{Ab}_{\operatorname*{fin}%
})\\
\mathcal{U}^{\operatorname*{loc}}(\mathsf{C})  &  \longrightarrow
\mathcal{U}^{\operatorname*{loc}}(\mathsf{LCA}_{\operatorname*{vf}%
})\longrightarrow\mathcal{U}^{\operatorname*{loc}}(\mathsf{Ab}/\mathsf{Ab}%
_{\operatorname*{fin}})\text{.}%
\end{align*}
By Tychonov's theorem arbitrary products of compact abelian groups are again
compact, so $\mathsf{C}$ is a complete category. Hence $\mathcal{U}%
^{\operatorname*{loc}}(\mathsf{C})=0$ by the Eilenberg swindle. Analogously,
$\mathsf{Ab}$ is co-complete, so $\mathcal{U}^{\operatorname*{loc}%
}(\mathsf{Ab})=0$. Combining these facts, the fiber sequences simplify to%
\[
\mathcal{U}^{\operatorname*{loc}}(\mathsf{LCA}_{\operatorname*{vf}%
})\overset{\sim}{\longrightarrow}\mathcal{U}^{\operatorname*{loc}}%
(\mathsf{Ab}/\mathsf{Ab}_{\operatorname*{fin}})\overset{\sim}{\longrightarrow
}\Sigma\mathcal{U}^{\operatorname*{loc}}(\mathsf{Ab}_{\operatorname*{fin}%
})\text{.}%
\]

\end{proof}

\begin{remark}
Actually, $\mathsf{Ab}$ and $\mathsf{C}$ are both complete and co-complete, so
it makes no difference which property we use for the Eilenberg swindle.
However, in the format as described in the proof, the inclusion functor
$\mathsf{Ab}\rightarrow\mathsf{LCA}_{\operatorname*{vf}}$ preserves arbitrary
colimits (resp. $\mathsf{C}\rightarrow\mathsf{LCA}_{\operatorname*{vf}}$
preserves arbitrary limits), so it is easier to visualize what is happening.
The respectively opposite type of (co)limit is not preserved by these functors.
\end{remark}

As a side result, we have computed the entire non-connective $K$-theory
spectrum of $\mathsf{LCA}_{\operatorname*{vf}}$:

\begin{corollary}
\label{cor_1}$K(\mathsf{LCA}_{\operatorname*{vf}})\cong\Sigma K(\mathsf{Ab}%
_{\operatorname*{fin}})$.
\end{corollary}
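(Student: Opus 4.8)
The plan is to simply apply non-connective $K$-theory to the motivic equivalence already secured in Lemma \ref{lemma_p_3}. The key structural fact I would invoke is that $K$ is a \emph{localizing invariant} in the sense of Blumberg--Gepner--Tabuada: it sends Verdier localization sequences of small stable $\infty$-categories to fiber sequences of spectra, and consequently factors through the universal localizing invariant as $K \simeq \overline{K} \circ \mathcal{U}^{\operatorname*{loc}}$, where $\overline{K}$ is an exact functor of stable $\infty$-categories from localizing motives to spectra.

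Being exact, $\overline{K}$ preserves the suspension, so $\overline{K}(\Sigma M) \simeq \Sigma \overline{K}(M)$ for every motive $M$. Applying $\overline{K}$ to the equivalence
\[
\mathcal{U}^{\operatorname*{loc}}(\mathsf{LCA}_{\operatorname*{vf}}) \overset{\sim}{\longrightarrow} \Sigma \mathcal{U}^{\operatorname*{loc}}(\mathsf{Ab}_{\operatorname*{fin}})
\]
of Lemma \ref{lemma_p_3} therefore yields
\[
K(\mathsf{LCA}_{\operatorname*{vf}}) \simeq \overline{K}\bigl(\Sigma \mathcal{U}^{\operatorname*{loc}}(\mathsf{Ab}_{\operatorname*{fin}})\bigr) \simeq \Sigma K(\mathsf{Ab}_{\operatorname*{fin}}),
\]
which is exactly the asserted equivalence.

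If one prefers to avoid the language of non-commutative motives altogether, I would instead run the same argument directly on $K$-theory spectra. The localization theorem for non-connective $K$-theory applied to the Verdier sequence of Lemma \ref{lemma_p_2} produces a fiber sequence $K(\mathsf{C}) \to K(\mathsf{LCA}_{\operatorname*{vf}}) \to K(\mathsf{Ab}/\mathsf{Ab}_{\operatorname*{fin}})$, and the Eilenberg swindle gives $K(\mathsf{C}) = 0$ (exactly as in the proof of Lemma \ref{lemma_p_3}), whence $K(\mathsf{LCA}_{\operatorname*{vf}}) \simeq K(\mathsf{Ab}/\mathsf{Ab}_{\operatorname*{fin}})$. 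The same theorem applied to Lemma \ref{lemma_p_1}, together with the vanishing $K(\mathsf{Ab}) = 0$, identifies $K(\mathsf{Ab}/\mathsf{Ab}_{\operatorname*{fin}}) \simeq \Sigma K(\mathsf{Ab}_{\operatorname*{fin}})$; chaining the two equivalences recovers the claim.

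There is no genuine obstacle here: all of the substantive work lives in the preceding lemmas, and the corollary is a formal consequence of the fact that $K$ is a localizing invariant. The only points demanding any care are bookkeeping the single suspension shift and confirming that the localization theorem really applies to these bounded derived $\infty$-categories $\operatorname*{D}\nolimits_{\infty}^{b}(-)$ of the \emph{quasi-abelian} category $\mathsf{LCA}_{\operatorname*{vf}}$ rather than an honestly abelian one --- but this is standard once the Verdier localization sequences of Lemmas \ref{lemma_p_1} and \ref{lemma_p_2} are in place.
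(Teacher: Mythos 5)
Your proposal is correct and is essentially the paper's own (largely implicit) argument: the paper presents Corollary \ref{cor_1} as an immediate consequence of Lemma \ref{lemma_p_3}, obtained by applying non-connective $K$-theory --- a localizing invariant factoring through $\mathcal{U}^{\operatorname*{loc}}$ via a suspension-preserving exact functor --- to the motivic equivalence. Your alternative route bypassing non-commutative motives is also exactly what the paper alludes to when it remarks that the motivic language is ``only one possible way'' to reach the corollary.
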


This differs significantly from the counterpart where real vector spaces are allowed:

\begin{theorem}
[Clausen]\label{thm_clausen}$K(\mathsf{LCA})\cong\operatorname*{cofib}%
(K(\mathbb{Z})\rightarrow K(\mathbb{R}))$.
\end{theorem}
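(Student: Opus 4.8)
The plan is to exhibit Clausen's equivalence as a single cofiber sequence
\[
K(\mathbb{Z})\xrightarrow{\ \otimes_{\mathbb{Z}}\mathbb{R}\ }K(\mathbb{R})\xrightarrow{\ \iota\ }K(\mathsf{LCA}),
\]
where $\iota$ is induced by the inclusion $\mathsf{Vect}_{fd}(\mathbb{R})\hookrightarrow\mathsf{LCA}$ of finite-dimensional real vector spaces and the first map is the base change $\mathbb{Z}^{n}\mapsto\mathbb{R}^{n}$. The conceptual heart is that the composite $\iota\circ(\otimes\mathbb{R})$ is \emph{canonically} nullhomotopic: for each lattice $L\cong\mathbb{Z}^{n}$ there is a natural admissible short exact sequence in $\mathsf{LCA}$
\[
L\hookrightarrow L\otimes_{\mathbb{Z}}\mathbb{R}\twoheadrightarrow L\otimes_{\mathbb{Z}}\mathbb{T},
\]
with $L$ discrete and $L\otimes_{\mathbb{Z}}\mathbb{T}\cong\mathbb{T}^{n}$ compact. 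By the additivity theorem this short exact sequence of exact functors shows that, on $K$-theory, $\iota\circ(\otimes\mathbb{R})$ equals the sum of the functor $L\mapsto L$ (factoring through $\mathsf{Ab}$) and the functor $L\mapsto L\otimes\mathbb{T}$ (factoring through $\mathsf{C}$). Since $K(\mathsf{Ab})=0$ by co-completeness and $K(\mathsf{C})=0$ by Tychonov (these are exactly the swindles used in the proof of Lemma~\ref{lemma_p_3}), both summands vanish, so $\iota\circ(\otimes\mathbb{R})$ acquires a canonical nullhomotopy. This nullhomotopy is precisely the datum defining a comparison map $\operatorname{cofib}(K(\mathbb{Z})\to K(\mathbb{R}))\to K(\mathsf{LCA})$.

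First I would then reduce the computation of $K(\mathsf{LCA})$ itself. As in Lemma~\ref{lemma_p_2}, $\mathsf{C}$ is a Serre subcategory with vanishing $K$-theory, and the localization machinery of \cite{hr2} should give $K(\mathsf{LCA})\simeq K(\mathsf{LCA}/\mathsf{C})$; dually, killing the discrete groups gives $K(\mathsf{LCA})\simeq K(\mathsf{LCA}/\mathsf{Ab})$, the two presentations being exchanged by Pontryagin duality. The decisive difference from the vector-free situation is that $\mathsf{LCA}/\mathsf{C}$ is strictly larger than $\mathsf{Ab}/\mathsf{Ab}_{\operatorname{fin}}$: the sequence $\mathbb{Z}^{n}\hookrightarrow\mathbb{R}^{n}\twoheadrightarrow\mathbb{T}^{n}$ makes the lattice inclusion $\mathbb{Z}^{n}\to\mathbb{R}^{n}$ invertible in the quotient, yet the full endomorphism ring $\operatorname{End}(\mathbb{R}^{n})=M_{n}(\mathbb{R})$ survives, so the reals persist in the morphism spaces even though every object becomes isomorphic to a discrete one. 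I would identify $\mathsf{LCA}/\mathsf{C}$ with an explicit category of finite-dimensional real vector spaces equipped with a lattice up to commensurability, and compute its $K$-theory; this is where the relative term $\operatorname{cofib}(K(\mathbb{Z})\to K(\mathbb{R}))$ must come from.

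The hard part will be exactly this identification of the Verdier quotient and the matching of the boundary map. The underlying difficulty is the phenomenon recorded in the Caution after Proposition~\ref{prop_struct}: $\mathsf{Vect}_{fd}(\mathbb{R})$ is \emph{not} a Serre subcategory of $\mathsf{LCA}$, since $\mathbb{R}^{n}$ carries the genuine closed subgroup $\mathbb{Z}^{n}$. Hence one cannot simply localize the real vector spaces away and collapse $K(\mathbb{R})$; instead the lattices $\mathbb{Z}^{n}$ refuse to die and leave behind the fiber $K(\mathbb{Z})$. Concretely, I expect to verify that the comparison map induces isomorphisms on all homotopy groups by playing the localization long exact sequences off against the localization $K(\mathsf{Ab}_{\operatorname{fin}})\to K(\mathbb{Z})\to K(\mathbb{Q})$ for the Dedekind ring $\mathbb{Z}$, with Corollary~\ref{cor_1} serving as a consistency anchor: the vector-free computation $K(\mathsf{LCA}_{\operatorname{vf}})=\Sigma K(\mathsf{Ab}_{\operatorname{fin}})$ should reappear as the non-archimedean shadow, while the archimedean place alone contributes precisely $\operatorname{cofib}(K(\mathbb{Z})\to K(\mathbb{R}))$.

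Finally, as a guiding check that also helps pin down the boundary map, on low homotopy groups the cofiber gives $\pi_{0}=\operatorname{coker}(K_{0}\mathbb{Z}\xrightarrow{\sim}K_{0}\mathbb{R})=0$ and $\pi_{1}=\operatorname{coker}(K_{1}\mathbb{Z}\to K_{1}\mathbb{R})=\mathbb{R}^{\times}/\{\pm1\}=\mathbb{R}_{>0}^{\times}$, matching the target $\mathsf{Tors}(\mathbb{R}_{>0}^{\times})$ of the Haar functor and dovetailing with $\pi_{1}K(\mathsf{LCA}_{\operatorname{vf}})=K_{0}(\mathsf{Ab}_{\operatorname{fin}})\cong\mathbb{Q}_{>0}^{\times}$ from Corollary~\ref{cor_1}.
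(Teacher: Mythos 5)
First, a point of comparison: the paper does not actually prove Theorem \ref{thm_clausen} at all; it is stated with attribution and the text merely points to \cite{clausen} for Clausen's original proof and to \cite{MR4028830} for a second one. So your proposal is not competing with an argument in this text, and it must stand on its own.

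The first half of your proposal does stand: the short exact sequence of exact functors $L\hookrightarrow L\otimes_{\mathbb{Z}}\mathbb{R}\twoheadrightarrow L\otimes_{\mathbb{Z}}\mathbb{T}$ on finitely generated free $\mathbb{Z}$-modules, combined with the additivity theorem and the two Eilenberg swindles ($K(\mathsf{Ab})\simeq 0$ by coproducts, $K(\mathsf{C})\simeq 0$ by Tychonov, exactly as in the proof of Lemma \ref{lemma_p_3}), does yield a nullhomotopy of the composite $K(\mathbb{Z})\rightarrow K(\mathbb{R})\rightarrow K(\mathsf{LCA})$ and hence a comparison map $\operatorname*{cofib}(K(\mathbb{Z})\rightarrow K(\mathbb{R}))\rightarrow K(\mathsf{LCA})$. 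This is a clean and correct way to produce the map.

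The genuine gap is everything after that: you never prove the comparison map is an equivalence, and the one concrete step you propose for doing so would fail. Granting (via the percolating-subcategory machinery of \cite{hr2}, as in Lemma \ref{lemma_p_2}) that $K(\mathsf{LCA})\simeq K(\mathsf{LCA}/\mathsf{C})$, the quotient $\mathsf{LCA}/\mathsf{C}$ is \emph{not} a category of finite-dimensional real vector spaces equipped with a lattice up to commensurability. Modulo compact groups every object does become isomorphic to a discrete group, so $\mathsf{LCA}/\mathsf{C}$ is equivalent to the category of \emph{all} discrete abelian groups with hom-sets enlarged by roofs passing through real vector spaces: one has $\operatorname*{End}_{\mathsf{LCA}/\mathsf{C}}(\mathbb{Z})\cong\mathbb{R}$, but also $\operatorname*{End}_{\mathsf{LCA}/\mathsf{C}}(\mathbb{Q}_{p}/\mathbb{Z}_{p})\cong\mathbb{Q}_{p}$ (the real directions cannot interact with torsion groups, since there are no nonzero continuous homomorphisms between $\mathbb{R}^{n}$ and a discrete torsion group in either direction), and objects such as $\mathbb{Q}$, $\mathbb{Q}/\mathbb{Z}$, or $\bigoplus_{\mathbb{N}}\mathbb{Z}$ survive. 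The full subcategory on the lattices $\mathbb{Z}^{n}\cong\mathbb{R}^{n}$ is indeed equivalent to $\mathsf{Vect}_{fd}(\mathbb{R})$ as an additive category, but it is neither cofinal nor does any d\'{e}vissage-type theorem apply ($\mathsf{LCA}/\mathsf{C}$ is neither abelian nor Noetherian), so its $K$-theory cannot simply be declared to be that of the quotient. Isolating the archimedean contribution inside this mixed category is precisely the hard content of Clausen's theorem, and your outline replaces it by the hope that "the relative term must come from" there. Likewise, the closing paragraph is a consistency check on $\pi_{0}$ and $\pi_{1}$ (together with a heuristic match against Corollary \ref{cor_1}); it verifies nothing in degrees $\geq 2$ or $<0$, which is what the spectrum-level statement asserts.
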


Clausen's original proof is in \cite{clausen}. Another proof is in
\cite{MR4028830}.

\section{Proof of the main theorem\label{sect_ProofOfMainTheorem}}

\begin{theorem}
\label{thm1}We have%
\[
K_{1}(\mathsf{LCA}_{\operatorname*{vf}})\cong\mathbb{Q}_{>0}^{\times}%
\qquad\text{and}\qquad K_{1}(\mathsf{LCA})\cong\mathbb{R}_{>0}^{\times}%
\]
and under the exact functor $\mathsf{LCA}_{\operatorname*{vf}}\longrightarrow
\mathsf{LCA}$, the induced map on $K_{1}$ is the inclusion of rational
numbers,%
\[
\mathbb{Q}_{>0}^{\times}\subset\mathbb{R}_{>0}^{\times}\text{.}%
\]
Moreover,%
\[
K_{0}(\mathsf{LCA}_{\operatorname*{vf}})=K_{0}(\mathsf{LCA})=0\text{.}%
\]

\end{theorem}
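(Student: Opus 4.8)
The plan is to reduce everything to the two structural inputs already in hand: Corollary \ref{cor_1}, which presents $K(\mathsf{LCA}_{\operatorname*{vf}})$ as a shift of $K(\mathsf{Ab}_{\operatorname*{fin}})$, and Clausen's Theorem \ref{thm_clausen}, which presents $K(\mathsf{LCA})$ as a cofiber. From these the four assertions fall out of two homotopy long exact sequences, together with one genuinely measure-theoretic input (the Haar scaling factor) needed to pin down the comparison map.

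First I would compute $K(\mathsf{Ab}_{\operatorname*{fin}})$ in the relevant degrees. Finite abelian groups split as the direct sum of their $p$-primary components, so $\mathsf{Ab}_{\operatorname*{fin}}\simeq\bigoplus_{p}\mathsf{Ab}_{\operatorname*{fin},p}$ and hence $K(\mathsf{Ab}_{\operatorname*{fin}})\simeq\bigoplus_{p}K(\mathsf{Ab}_{\operatorname*{fin},p})$. Each $\mathsf{Ab}_{\operatorname*{fin},p}$ is an abelian category in which every object has a finite filtration with subquotients $\mathbb{Z}/p$, whose endomorphism ring is $\mathbb{F}_{p}$; Quillen dévissage then gives $K(\mathsf{Ab}_{\operatorname*{fin},p})\simeq K(\mathbb{F}_{p})$. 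Since $\mathbb{F}_{p}$ is regular, $K_{-1}(\mathbb{F}_{p})=0$, while $K_{0}(\mathbb{F}_{p})=\mathbb{Z}$ is generated by the class of $\mathbb{Z}/p$. Therefore $K_{-1}(\mathsf{Ab}_{\operatorname*{fin}})=0$ and $K_{0}(\mathsf{Ab}_{\operatorname*{fin}})=\bigoplus_{p}\mathbb{Z}$ with basis $\{[\mathbb{Z}/p]\}_{p}$. Feeding this through $K(\mathsf{LCA}_{\operatorname*{vf}})\cong\Sigma K(\mathsf{Ab}_{\operatorname*{fin}})$, i.e. $K_{n}(\mathsf{LCA}_{\operatorname*{vf}})\cong K_{n-1}(\mathsf{Ab}_{\operatorname*{fin}})$, yields $K_{0}(\mathsf{LCA}_{\operatorname*{vf}})=0$ and $K_{1}(\mathsf{LCA}_{\operatorname*{vf}})\cong\bigoplus_{p}\mathbb{Z}$; matching the basis element $[\mathbb{Z}/p]$ with the prime $p$ identifies the latter with $\mathbb{Q}_{>0}^{\times}$ by the fundamental theorem of arithmetic.

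Next I would treat $\mathsf{LCA}$ by running the long exact sequence of the cofiber $\operatorname*{cofib}(K(\mathbb{Z})\rightarrow K(\mathbb{R}))$. The segment $K_{1}(\mathbb{Z})\rightarrow K_{1}(\mathbb{R})\rightarrow K_{1}(\mathsf{LCA})\rightarrow K_{0}(\mathbb{Z})\rightarrow K_{0}(\mathbb{R})\rightarrow K_{0}(\mathsf{LCA})\rightarrow K_{-1}(\mathbb{Z})$ simplifies using $K_{-1}(\mathbb{Z})=0$, the fact that $K_{0}(\mathbb{Z})\rightarrow K_{0}(\mathbb{R})$ is the rank isomorphism $\mathbb{Z}\xrightarrow{\sim}\mathbb{Z}$, and $K_{1}(\mathbb{Z})=\{\pm1\}\hookrightarrow\mathbb{R}^{\times}=K_{1}(\mathbb{R})$. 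The rank isomorphism forces both $K_{0}(\mathsf{LCA})=0$ and the vanishing of the connecting map out of $K_{1}(\mathsf{LCA})$, so $K_{1}(\mathsf{LCA})\cong\operatorname*{coker}(\{\pm1\}\hookrightarrow\mathbb{R}^{\times})=\mathbb{R}_{>0}^{\times}$.

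The hard part is the last claim, that $K_{1}(\mathsf{LCA}_{\operatorname*{vf}})\rightarrow K_{1}(\mathsf{LCA})$ is the inclusion $\mathbb{Q}_{>0}^{\times}\subset\mathbb{R}_{>0}^{\times}$, since the two computations identify source and target through entirely unrelated mechanisms (dévissage versus Clausen's cofiber) and one must reconcile their normalizations. The bridge I would use is the Haar determinant functor: evaluating it on automorphisms induces homomorphisms $\lambda\colon K_{1}(\mathsf{LCA})\rightarrow\mathbb{R}_{>0}^{\times}$ and $\lambda_{\operatorname*{vf}}\colon K_{1}(\mathsf{LCA}_{\operatorname*{vf}})\rightarrow\mathbb{R}_{>0}^{\times}$ into $\pi_{1}(\mathsf{Tors}(\mathbb{R}_{>0}^{\times}))=\mathbb{R}_{>0}^{\times}$, and since $Ha$ on $\mathsf{LCA}_{\operatorname*{vf}}$ is literally the restriction of $Ha$, these satisfy $\lambda\circ K_{1}(j)=\lambda_{\operatorname*{vf}}$. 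I would then evaluate both sides on the explicit automorphisms $\cdot p\colon\mathbb{Q}_{p}\rightarrow\mathbb{Q}_{p}$: the pushforward of a Haar measure along $\cdot p$ rescales the volume of $\mathbb{Z}_{p}$ by $p$, so $\lambda_{\operatorname*{vf}}[\cdot p]=p$; and computing the boundary map realizing $K_{1}(\mathsf{LCA}_{\operatorname*{vf}})\cong K_{0}(\mathsf{Ab}_{\operatorname*{fin}})$ from Lemmas \ref{lemma_p_1} and \ref{lemma_p_2} on this class — $\cdot p$ descends to $\cdot p$ on $\mathbb{Q}_{p}/\mathbb{Z}_{p}$ in $\mathsf{Ab}/\mathsf{Ab}_{\operatorname*{fin}}$, with kernel $\mathbb{Z}/p$ and vanishing cokernel — shows $[\cdot p]=\pm[\mathbb{Z}/p]$. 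Thus the $[\cdot p]$ form a basis of $K_{1}(\mathsf{LCA}_{\operatorname*{vf}})$ realizing the identification with $\mathbb{Q}_{>0}^{\times}$, and on the $\mathsf{LCA}$ side the compatibility of $Ha$ with the cofiber sequence (Clausen's map $K(\mathbb{R})\rightarrow K(\mathsf{LCA})$ is induced by the inclusion of real vector spaces, on which $Ha$ is Lebesgue measure, so $K_{1}(\mathbb{R})=\mathbb{R}^{\times}\rightarrow\mathbb{R}_{>0}^{\times}$ is $t\mapsto|t|$) shows $\lambda$ is exactly the isomorphism of the previous paragraph. Chasing $\cdot p$ through $\lambda\circ K_{1}(j)=\lambda_{\operatorname*{vf}}$ then gives $K_{1}(j)(p)=p$, i.e. the inclusion. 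I expect the main obstacle to be precisely this reconciliation: verifying that the localization boundary map sends the analytic generator $\cdot p$ to the algebraic generator $[\mathbb{Z}/p]$ and that the Haar rescaling factor is $p$ (and not $p^{-1}$ or some multiple), which is the one place where genuine measure-theoretic content, rather than formal $K$-theory, enters.
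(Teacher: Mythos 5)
Your proposal is correct and takes essentially the same route as the paper: d\'{e}vissage plus the $p$-primary splitting to get $K(\mathsf{Ab}_{\operatorname*{fin}})\cong\bigoplus_{p}K(\mathbb{F}_{p})$, the long exact sequence of Clausen's cofiber for $K_{0}(\mathsf{LCA})$ and $K_{1}(\mathsf{LCA})$, and the comparison map pinned down by evaluating both the localization boundary map and the Haar scaling on the automorphisms $\cdot p\colon\mathbb{Q}_{p}\rightarrow\mathbb{Q}_{p}$. The one step you defer to a ``known'' kernel-minus-cokernel formula, $\delta[\cdot p]=\pm[\mathbb{Z}/p]$, is exactly where the paper invests its technical effort --- it verifies this by an explicit path lift in the Gillet--Grayson model, precisely because the map of simplicial sets underlying the localization sequence need not be a Kan fibration, so generic lifting arguments cannot simply be quoted --- and the sign ambiguity you leave open is harmless, since (as in the paper) the identification $\bigoplus_{p}\mathbb{Z}\cong\mathbb{Q}_{>0}^{\times}$ is chosen at the end so that $K_{1}(\mathsf{LCA}_{\operatorname*{vf}})\rightarrow K_{1}(\mathsf{LCA})$ becomes the inclusion.
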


\begin{proof}
The computation $K_{1}(\mathsf{LCA})\cong\mathbb{R}_{>0}^{\times}$ goes back
to Clausen \cite{clausen} (use Theorem \ref{thm_clausen}, showing that
\ldots$\rightarrow\mathbb{Z}^{\times}\rightarrow\mathbb{R}^{\times}\rightarrow
K_{1}(\mathsf{LCA})\overset{0}{\rightarrow}\mathbb{Z}$ is exact). A different
proof is in \cite[Theorem 12.8]{MR4028830}. The same techniques show that
$K_{0}(\mathsf{LCA})=0$. Hence, we focus on computing the vector-free variant
$K_{1}(\mathsf{LCA}_{\operatorname*{vf}})$: Every finite abelian group
uniquely(!) splits into its $p$-primary torsion summands. This induces an
equivalence of abelian categories,%
\[
\mathsf{Ab}_{\operatorname*{fin}}\cong\bigoplus_{p}\mathsf{Ab}%
_{\operatorname*{fin}}[p^{\infty}]\text{,}%
\]
where $\mathsf{Ab}_{\operatorname*{fin}}[p^{\infty}]$ is the abelian category
of finite $p$-power torsion abelian groups. Every such group has a finite
filtration by quotients killed by $p$ (or said differently: the simple objects
of the category). Hence, these quotients are finite-dimensional $\mathbb{F}%
_{p}$-vector spaces. By d\'{e}vissage we deduce for connective (Quillen)
$K$-theory that%
\[
K^{\operatorname*{conn}}(\mathsf{Ab}_{\operatorname*{fin}}[p^{\infty}])\cong
K^{\operatorname*{conn}}(\mathbb{F}_{p})
\]
for all primes $p$ (\cite[Ch. V, Theorem 4.1]{MR3076731}). Since both
$\mathsf{Ab}_{\operatorname*{fin}}[p^{\infty}]$ and $\mathsf{Vect}%
_{fd}(\mathbb{F}_{p})$ are Noetherian abelian categories, Schlichting's
theorem \cite[\S 10.1, Theorem 7]{MR2206639} implies that either category has
connective non-connective $K$-theory (i.e., $\pi_{i}K(-)=0$ for all $i<0$).
Since the categories are abelian, they are idempotent complete, so
$K_{0}=K_{0}^{\operatorname*{conn}}$ (\cite[\S 6.2, Remark 3]{MR2206639}). It
follows that for either category connective $K$-theory agrees with the
non-connective $K$-theory \cite[\S 12.2]{MR2206639}, so%
\begin{equation}
K(\mathsf{Ab}_{\operatorname*{fin}})\cong\bigoplus_{p}K(\mathsf{Ab}%
_{\operatorname*{fin}}[p^{\infty}])\cong\bigoplus_{p}K(\mathbb{F}_{p}%
)\text{.}\label{lp_1}%
\end{equation}
Combining this with Corollary \ref{cor_1},
\begin{equation}
K_{1}(\mathsf{LCA}_{\operatorname*{vf}})\underset{\text{Cor. \ref{cor_1}%
}}{\cong}K_{0}(\mathsf{Ab}_{\operatorname*{fin}})\underset{\text{Eq.
\ref{lp_1}}}{\cong}\bigoplus_{p}K_{0}(\mathbb{F}_{p})\cong\bigoplus
_{p}\mathbb{Z}\text{.}\label{lp_2}%
\end{equation}
The trickier part of the proof is now that while $\mathsf{LCA}%
_{\operatorname*{vf}}\longrightarrow\mathsf{LCA}$ certainly induces a map%
\begin{equation}%
{
\begin{tikzcd}
	{K_{1}(\mathsf{LCA}_{\operatorname{vf} })} && {K_{1}(\mathsf{LCA})} \\
	\\
	{\underset{p}{\bigoplus} \mathbb{Z}} && {\mathbb{R}^{\times}_{>0},}
	\arrow[from=1-1, to=1-3]
	\arrow[from=1-1, to=3-1]
	\arrow[from=1-3, to=3-3]
	\arrow[dashed, from=3-1, to=3-3]
\end{tikzcd}
}%
\label{d_3}%
\end{equation}
it is not so clear what the dashed arrow actually does (for all we know so
far, it could be the zero map). In order to analyze this map, we shall
construct a class $-\alpha_{p}\in K_{1}(\mathsf{LCA}_{\operatorname*{vf}})$
which is sent under the map in Eq. \ref{lp_2} to $1_{p}$ (i.e., $1$ in the
summand belonging to the prime $p$). To this end, we follow a technique due to
Sherman and we rephrase the map of Corollary \ref{cor_1} in terms of a
$K$-theory localization sequence, where it corresponds to the connecting map
$\delta$ in%
\begin{equation}
K^{\operatorname*{conn}}(\mathsf{Ab}_{\operatorname*{fin}})\longrightarrow
\underset{=0}{K^{\operatorname*{conn}}(\mathsf{Ab})}\longrightarrow
K^{\operatorname*{conn}}(\mathsf{Ab}/\mathsf{Ab}_{\operatorname*{fin}%
})\overset{\delta}{\longrightarrow}\Sigma K^{\operatorname*{conn}}%
(\mathsf{Ab}_{\operatorname*{fin}})\label{lc4}%
\end{equation}
in connective $K$-theory (this is only true because we work on $K_{1}$,
mapping to $K_{0}$, and we had seen above that in this range connective
$K$-theory agrees with its non-connective counterpart). We now claim that the
automorphism of multiplication by $p$ on the $p$-adics $\mathbb{Q}_{p}$ is a
representative for the sought-for class $\alpha_{p}$. To see this, we need to
compute $\tilde{\delta}(\alpha_{p})$ in the following commutative diagram:%
\[%
{
\begin{tikzcd}
	{\alpha_p \in} & {\pi_{1}K(\mathsf{LCA}_{\operatorname*{vf}})} \\
	& {\pi_{1}K^{\operatorname*{conn}}(\mathsf{LCA}_{\operatorname*{vf}}%
/\mathsf{C})} \\
	& {\pi_{1}K^{\operatorname*{conn}}(\mathsf{Ab}/\mathsf{Ab}_{\operatorname
*{fin} })} && {\pi_{1}\Sigma K^{\operatorname*{conn}}(\mathsf{Ab}%
_{\operatorname*{fin} })} & {\cong\bigoplus_{p}\mathbb{Z}\ni-1_{p}.}
	\arrow[from=1-2, to=2-2]
	\arrow["{\tilde{\delta}}", from=1-2, to=3-4]
	\arrow["{{\Xi}^{-1}}"', from=2-2, to=3-2]
	\arrow["\delta"', from=3-2, to=3-4]
\end{tikzcd}
}%
\]
The object $\mathbb{Q}_{p}\in\mathsf{LCA}_{\operatorname*{vf}}$ under
$\Xi^{-1}$ (Eq. \ref{lp_3}) corresponds to the discrete group $\mathbb{Q}%
_{p}/\mathbb{Z}_{p}$ in $\mathsf{Ab}$ since%
\[
\mathbb{Z}_{p}\hookrightarrow\mathbb{Q}_{p}\overset{w_{p}}{\twoheadrightarrow
}\mathbb{Q}_{p}/\mathbb{Z}_{p}%
\]
is an exact sequence in $\mathsf{LCA}_{\operatorname*{vf}}$ with
$\mathbb{Z}_{p}$ compact, so that $w_{p}$ becomes an isomorphism in the
quotient category $\mathsf{LCA}_{\operatorname*{vf}}/\mathsf{C}$. As
$\mathbb{Q}_{p}/\mathbb{Z}_{p}$ is already a discrete group, $\Xi^{-1}$ just
sends this group to itself, now living in the quotient category $\mathsf{Ab}%
/\mathsf{Ab}_{\operatorname*{fin}}$. The horizontal map $\delta$ comes from
the connecting homomorphism in the long exact sequence of homotopy groups
attached to the fibration in Eq. \ref{lc4}. We can compute this explicitly by
using a simplicial model of the underlying $K$-theory spaces\footnote{note
that since we use \textit{connective} $K$-theory here, we may regard each
$K^{\operatorname*{conn}}$ as a space, equipped with the datum of an infinite
loop space/grouplike $E_{\infty}$-space}. We choose the Gillet--Grayson model
$G_{\bullet}$ for this purpose and in \S \ref{sect_GilletGraysonModel} we
summarize all the facts we shall need to know in order to carry out the
following computation. The map of multiplication by $p$ on $\mathbb{Q}%
_{p}/\mathbb{Z}_{p}$ corresponds to the arrow%
\begin{equation}
\mathbb{Q}_{p}/\mathbb{Z}_{p}\overset{\cdot p}{\longrightarrow}\mathbb{Q}%
_{p}/\mathbb{Z}_{p}\label{lcz62}%
\end{equation}
which is indeed an automorphism in $\mathsf{Ab}/\mathsf{Ab}%
_{\operatorname*{fin}}$ (it is surjective, but has kernel $\frac{1}%
{p}\mathbb{Z}_{p}/\mathbb{Z}_{p}$ in $\mathsf{Ab}$. Since this kernel is a
finite abelian group, it is zero in the quotient category). This determines a
class $\alpha_{p}\in\pi_{1}K^{\operatorname*{conn}}(\mathsf{Ab}/\mathsf{Ab}%
_{\operatorname*{fin}})$, geometrically representable by a closed loop around
the basepoint $(0,0)$ in the Gillet--Grayson model. The boundary map $\delta$
on homotopy groups%
\[
\pi_{1}\left\vert G_{\bullet}(\mathsf{Ab}/\mathsf{Ab}_{\operatorname*{fin}%
})\right\vert \longrightarrow\pi_{0}\left\vert G_{\bullet}(\mathsf{Ab}%
_{\operatorname*{fin}})\right\vert
\]
corresponds to lifting this loop to a path in $G_{\bullet}(\mathsf{Ab})$ and
the output value of $\delta$ is the connected component in which the lifted
path ends in $\pi_{0}\left\vert G_{\bullet}(\mathsf{Ab}_{\operatorname*{fin}%
})\right\vert $. The homotopy lifting property of a fibration usually
guarantees that such a lift exists for any loop. Unfortunately, even though
Eq. \ref{lc4} is a (homotopy) fibration sequence, the underlying map between
the Gillet--Grayson simplicial sets need not be a simplicial (Kan) fibration.
However, we may entirely bypass the path lifting property if we are able to
manually exhibit a path lifting the loop.{\footnote{I thank Clayton Sherman
and Alexander Nenashev, as I have learned this technique of working with
explicit simplicial models of $K$-theory from their highly inspiring works
\cite{MR1409623,MR1409625,MR1621689}.}} This is what we shall do: A
$1$-simplex in $G_{\bullet}(\mathsf{Ab})$ is given by a pair of exact
sequences in $\mathsf{Ab}$ with the same cokernel (Eq. \ref{lcimez22}), so
define $\xi$ by%
\[%
{
\begin{tikzcd}
	{\frac{1}{p}\mathbb{Z}_{p}/\mathbb{Z}_{p}} && {\mathbb{Q}_{p}/\mathbb{Z}_{p}}
&& {\mathbb{Q}_{p}/\mathbb{Z}_{p}} \\
	0 && {\mathbb{Q}_{p}/\mathbb{Z}_{p}} && {\mathbb{Q}_{p}/\mathbb{Z}_{p},}
	\arrow[dotted, hook, from=1-1, to=1-3]
	\arrow["{\cdot p}", dotted, two heads, from=1-3, to=1-5]
	\arrow[equals, from=1-5, to=2-5]
	\arrow[hook, from=2-1, to=2-3]
	\arrow["{\cdot1}"', two heads, from=2-3, to=2-5]
\end{tikzcd}
}%
\]
a path from $(\frac{1}{p}\mathbb{Z}_{p}/\mathbb{Z}_{p},0)$ to $(\mathbb{Q}%
_{p}/\mathbb{Z}_{p},\mathbb{Q}_{p}/\mathbb{Z}_{p})$. Now consider the path
given solely by the \textit{solid} arrows in%
\begin{equation}%
{
\begin{tikzcd}
	{(\frac{1}{p}\mathbb{Z}_{p}/\mathbb{Z}_{p},0)} &&&& {(\mathbb{Q}_{p}%
/\mathbb{Z}_{p},\mathbb{Q}_{p}/\mathbb{Z}_{p})} \\
	&& {(0,0)}
	\arrow["\xi", from=1-1, to=1-5]
	\arrow["\rho", dashed, from=2-3, to=1-1]
	\arrow["{\nu(\mathbb{Q}_{p}/\mathbb{Z}_{p})}"', from=2-3, to=1-5]
\end{tikzcd}
}%
\label{lcz61}%
\end{equation}
and $\nu$ as in Eq. \ref{lcz60}. We shall show that under the map $\left\vert
G_{\bullet}(\mathsf{Ab})\right\vert \longrightarrow\left\vert G_{\bullet
}(\mathsf{Ab}/\mathsf{Ab}_{\operatorname*{fin}})\right\vert $ this path
(essentially) maps to a closed loop: We note that both (distinct) objects $0$
and $\frac{1}{p}\mathbb{Z}_{p}/\mathbb{Z}_{p}$ from $\mathsf{Ab}$ become zero
objects in the quotient category $\mathsf{Ab}/\mathsf{Ab}_{\operatorname*{fin}%
}$, and thus become isomorphic by a unique map. Therefore, up to replacing
$\mathsf{Ab}/\mathsf{Ab}_{\operatorname*{fin}}$ by an equivalent category,
call it $\widetilde{\mathsf{Ab}/\mathsf{Ab}_{\operatorname*{fin}}}$, we may
identify these zero objects to become strictly the same object. Then
$(\frac{1}{p}\mathbb{Z}_{p}/\mathbb{Z}_{p},0)$ and $(0,0)$ are the same
$0$-simplex in $G_{\bullet}(\widetilde{\mathsf{Ab}/\mathsf{Ab}%
_{\operatorname*{fin}}})$. From the discussion in Example \ref{example_pi1GG}
we now see that all arrows in Diagram \ref{lcz61} define a closed loop around
the basepoint $(0,0)$ of $\left\vert G_{\bullet}(\widetilde{\mathsf{Ab}%
/\mathsf{Ab}_{\operatorname*{fin}}})\right\vert $ representing the $K_{1}%
$-class of the automorphism of Eq. \ref{lcz62}. Hence, the solid arrows in
Diagram \ref{lcz61} yield a lift of this path to $\left\vert G_{\bullet
}(\mathsf{Ab})\right\vert $.%
\[%
{\includegraphics[
height=1.4218in,
width=2.9049in
]%
{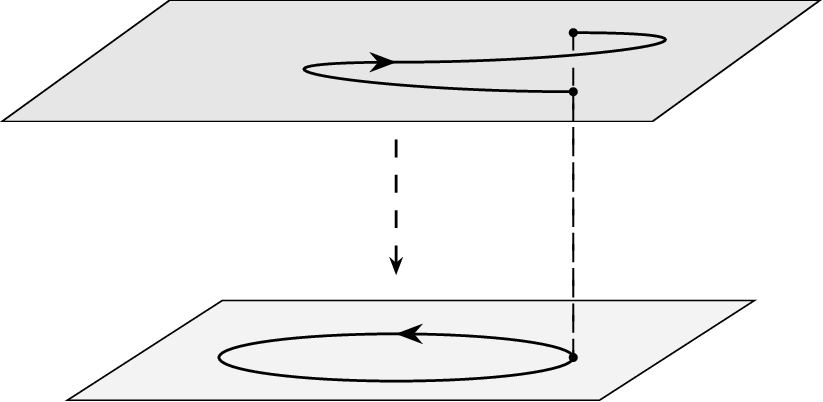}%
}
\]
The endpoint of this path in $G_{\bullet}(\mathsf{Ab}_{\operatorname*{fin}})$
is the $0$-simplex $(\frac{1}{p}\mathbb{Z}_{p}/\mathbb{Z}_{p},0)$, which
corresponds to the $K_{0}$-class%
\[
\left[  0\right]  -\left[  \frac{1}{p}\mathbb{Z}_{p}/\mathbb{Z}_{p}\right]
\]
by Example \ref{example_pi0GG}. Under the d\'{e}vissage of Eq. \ref{lp_1},
this in turn identifies\footnote{d\'{e}vissage is trivial on this class as the
underlying object is already simple} with (the $K_{0}$-group negative of) a
one-dimensional $\mathbb{F}_{p}$-vector space $\mathbb{F}_{p}\simeq\frac{1}%
{p}\mathbb{Z}_{p}/\mathbb{Z}_{p}$, i.e., to $-1\in\mathbb{Z}$ in the $p$-th
direct summand of Eq. \ref{lp_2}, all on the right. This finishes the proof
that%
\[
\delta([\alpha_{p}])=-1_{p}\in\bigoplus_{p}\mathbb{Z}\text{.}%
\]
We now need to check what $\alpha_{p}$ corresponds to under the map%
\[
K_{1}(\mathsf{LCA}_{\operatorname*{vf}})\longrightarrow K_{1}(\mathsf{LCA})
\]
induced from the exact functor $\mathsf{LCA}_{\operatorname*{vf}%
}\longrightarrow\mathsf{LCA}$. However, it was already computed in
\cite[Example 2.3, Prop. 13.3]{MR4028830} that multiplication by $p$ on
$\mathbb{Q}_{p}$ corresponds under the Haar torsor to multiplication with the
$p$-adic valuation, i.e., $v_{p}(p)=\frac{1}{p}\in\mathbb{R}_{>0}^{\times}$.
This means that the map%
\begin{align*}
K_{1}(\mathsf{LCA}_{\operatorname*{vf}}) &  \longrightarrow K_{1}%
(\mathsf{LCA})\\
\bigoplus_{p}\mathbb{Z} &  \longrightarrow\mathbb{R}_{>0}^{\times}%
\end{align*}
agrees with $-1_{p}\mapsto\frac{1}{p}$. But then it is actually better to
identify $\bigoplus_{p}\mathbb{Z}\simeq\mathbb{Q}_{>0}^{\times}$ with the
positive rational numbers whose prime factor decomposition $+2^{a_{2}}%
3^{a_{3}}\ldots$ corresponds to the vector $(a_{2},a_{3},\ldots)$ in
$\bigoplus_{p}\mathbb{Z}$.
\end{proof}

Write $\mathsf{Sp}^{0,1}$ for the stable $\infty$-category of spectra
concentrated in degrees $[0,1]$. Recall that we denote by $\mathsf{Picard}$
the $2$-category of Picard groupoids.

\begin{proposition}
\label{prop_HomotopyCatsOfPicardGroupoidsAndSpectra}There is an equivalence of
homotopy categories%
\begin{equation}
\Psi\colon\operatorname*{Ho}(\mathsf{Picard})\overset{\sim}{\longrightarrow
}\operatorname*{Ho}(\mathsf{Sp}^{0,1})\text{.} \label{lceyAJ1}%
\end{equation}
This correspondence preserves the notions of homotopy groups $\pi_{0},\pi_{1}$
on either side and the stable $k$-invariant of the spectrum corresponds to the
stable $k$-invariant for Picard groupoids of Definition \ref{def_Signature} in
$\mathsf{Picard}$.
\end{proposition}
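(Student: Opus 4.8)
The plan is not to reprove the equivalence from scratch, but to assemble it from the known correspondence between Picard groupoids and stable homotopy $1$-types and then to verify the three compatibility claims by hand. Recall that a Picard groupoid $(\mathsf{P},\otimes)$ is precisely a grouplike symmetric monoidal groupoid, so its nerve $N\mathsf{P}$ is a grouplike $E_{\infty}$-space which, being the nerve of a $1$-groupoid, is a homotopy $1$-type. Applying the delooping (group-completion) machine of the recognition principle to this grouplike $E_{\infty}$-space produces a connective spectrum $\Psi(\mathsf{P})$, and since $N\mathsf{P}$ is a $1$-type the spectrum satisfies $\pi_{i}=0$ for $i\notin\{0,1\}$, i.e.\ $\Psi(\mathsf{P})\in\mathsf{Sp}^{0,1}$. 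In the reverse direction, a spectrum $X\in\mathsf{Sp}^{0,1}$ has an infinite loop space $\Omega^{\infty}X$ which is again a grouplike $E_{\infty}$-space and a $1$-type, and its fundamental groupoid (objects the points, morphisms homotopy classes of paths) inherits a symmetric monoidal structure making it a Picard groupoid. That these two assignments are mutually inverse on homotopy categories is the content of the equivalence of $2$-categories between Picard groupoids and stable $1$-types; I would cite Johnson--Osorno (``Modeling stable one-types'') for the precise statement, with the classical theory of Picard (gr-)categories \cite{MR0338002} as background, and then pass to homotopy categories of the two $2$-categories to obtain the stated equivalence $\Psi$.

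Granting this equivalence, the first two compatibilities are immediate from the construction. On the one hand $\pi_{0}$ of the spectrum is the set of path components of $N\mathsf{P}$ with its induced group structure, which is exactly $\pi_{0}(\mathsf{P},\otimes)$, the group of isomorphism classes of objects under $\otimes$. On the other hand $\pi_{1}$ of the spectrum is $\pi_{1}$ of $N\mathsf{P}$ based at the unit, which is $\operatorname{Aut}_{\mathsf{P}}(1_{\mathsf{P}})=\pi_{1}(\mathsf{P},\otimes)$. So $\Psi$ preserves $\pi_{0}$ and $\pi_{1}$ on the nose.

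The real work, and the expected main obstacle, is matching the two stable $k$-invariants. A spectrum $X\in\mathsf{Sp}^{0,1}$ sits in a Postnikov fiber sequence $\Sigma H\pi_{1}X\to X\to H\pi_{0}X$ classified by a $k$-invariant in $H^{2}(H\pi_{0}X;\pi_{1}X)=[H\pi_{0}X,\Sigma^{2}H\pi_{1}X]$. The key computation is that this group of degree-$2$ stable cohomology operations equals $\operatorname{Hom}(\pi_{0}X/2,{}_{2}\pi_{1}X)$, the single nonzero operation being multiplication by the stable Hopf element $\eta\in\pi_{1}^{s}(\mathbb{S})=\mathbb{Z}/2$; since $2\eta=0$, the assignment $x\mapsto\eta\cdot x$ kills $2\pi_{0}X$ and lands in the $2$-torsion ${}_{2}\pi_{1}X$, so the $k$-invariant of $X$ is literally the homomorphism $\pi_{0}X\to{}_{2}\pi_{1}X$, $x\mapsto\eta x$. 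I would establish this either via the mod-$2$ computation $H^{\ast}(H\mathbb{Z};\mathbb{F}_{2})=\mathcal{A}/\mathcal{A}\mathrm{Sq}^{1}$ (whose degree-$2$ part is spanned by $\mathrm{Sq}^{2}$) together with universal coefficients, checking the answer against the sphere truncation $(\pi_{0},\pi_{1})=(\mathbb{Z},\mathbb{Z}/2)$ where $\eta\neq 0$, or by directly invoking the classification of stable $1$-types by the triple $(\pi_{0},\pi_{1},\eta\text{-action})$.

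Finally I would identify this $\eta$-action with the categorical signature $\varepsilon$ of Definition~\ref{def_Signature}. On the Picard side $\varepsilon_{X}$ is built from the self-symmetry $s_{X,X}$, i.e.\ from the transposition of the two tensor factors; under the Barratt--Priddy--Quillen description of the sphere spectrum, $\eta$ is precisely the class detected by this transposition in $\Sigma_{2}$. Thus $X\mapsto\varepsilon_{X}$ and $x\mapsto\eta x$ agree as homomorphisms $\pi_{0}(\mathsf{P},\otimes)\to{}_{2}\pi_{1}(\mathsf{P},\otimes)$, which is exactly the assertion that $\Psi$ carries the categorical stable $k$-invariant to the spectral one. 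The delicate point throughout is tracking this transposition-to-$\eta$ identification compatibly with the delooping, and it is here that I would lean most heavily on the cited equivalence of $2$-categories rather than attempt an ad hoc verification.
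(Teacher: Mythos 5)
Your proposal is correct and takes essentially the same route as the paper: the paper does not reprove the equivalence either, but cites the literature (Johnson--Osorno's ``Modeling stable one-types'', the very reference you name, plus one more) after elaborating the identification $[H\pi_{0}X,\Sigma^{2}H\pi_{1}X]\cong\operatorname{Hom}(\pi_{0}X\otimes\mathbb{Z}/2,\,{}_{2}\pi_{1}X)$, which is exactly the computation you carry out via $\mathrm{Sq}^{2}$ and the $\eta$-action. Your extra detail (nerve/delooping constructions, the transposition-to-$\eta$ matching under Barratt--Priddy--Quillen) is a correct elaboration of precisely what those cited sources prove, so the two arguments agree in substance.
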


To elaborate on the stable $k$-invariant: Given a spectrum concentrated in
degrees $[0,1]$, let%
\[
\underset{\text{or }K(\pi_{1}X,1)}{\Sigma H\pi_{1}(X)}\longrightarrow
X\longrightarrow\underset{\text{or }K(\pi_{0}X,0)}{H\pi_{0}(X)}\text{,}%
\]
be its fiber sequence of truncation in $\mathsf{Sp}$, decomposing $X$ into two
(shifts) of Eilenberg--Mac\ Lane spectra (a tiny version of a stable Postnikov
tower). Then the connecting homomorphism%
\[
H\pi_{0}(X)\longrightarrow\Sigma^{2}H\pi_{1}(X)
\]
determines a class in $[H\pi_{0}(X),\Sigma^{2}H\pi_{1}(X)]$, which as an
abelian group can be seen to correspond to the group of homomorphism $\pi
_{0}(X)\otimes\mathbb{Z}/2\rightarrow\left.  _{2}\pi_{1}(X)\right.  $, as the
stable $k$-invariant of the attached Picard groupoid. Proofs are given in
\cite[\S 5.1, Theorem 5.3]{MR2981817} or \cite[1.5\ Theorem]{MR2981952}, but
already\ Grothendieck was aware of this correspondence.

\begin{example}
\label{example_TorsAIsLikeSigmaHA}$\Psi\left(  \mathsf{Tors}(A)\right)
=\Sigma HA$ for all abelian groups. Said differently: The groupoid of
$A$-torsors corresponds to the Eilenberg-Mac Lane spectrum of $A$, shifted to
sit in degree one.
\end{example}

\begin{example}
\label{example_BaseChangeOfTorsAgreesWithNaturalMap}The basechange of torsors
from Eq. \ref{lrio0}, $i_{\ast}\colon\mathsf{Tors}(A)\rightarrow
\mathsf{Tors}(B)$, under $\Psi$ gets sent to the $\Sigma$-shift of the natural
map $HA\rightarrow HB$.
\end{example}

\begin{proposition}
\label{prop_1}The virtual objects $V(\mathsf{LCA}_{\operatorname*{vf}})$ are
symmetric monoidally equivalent to the Picard groupoid $\mathsf{Tors}%
(\mathbb{Q}_{>0}^{\times})$. Under this identification, the composition%
\[
\mathsf{LCA}_{\operatorname*{vf}}^{\times}\overset{u}{\longrightarrow
}\mathsf{Tors}(\mathbb{Q}_{>0}^{\times})\overset{i_{\ast}}{\longrightarrow
}\mathsf{Tors}(\mathbb{R}_{>0}^{\times})\text{,}%
\]
where $u$ is the universal determinant functor of $\mathsf{LCA}%
_{\operatorname*{vf}}$, agrees with the Haar measure restricted to
$\mathsf{LCA}_{\operatorname*{vf}}$, i.e.,%
\begin{equation}
i_{\ast}\circ u=Ha\mid_{\mathsf{LCA}_{\operatorname*{vf}}}\text{.} \label{lt3}%
\end{equation}

\end{proposition}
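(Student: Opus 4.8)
The plan is to combine the $K$-theoretic computation of Theorem~\ref{thm1} with the correspondence between Picard groupoids and spectra concentrated in degrees $[0,1]$ from Proposition~\ref{prop_HomotopyCatsOfPicardGroupoidsAndSpectra}. The key conceptual input is Deligne's theorem that the universal determinant functor $u\colon \mathsf{LCA}_{\operatorname*{vf}}^{\times}\rightarrow V(\mathsf{LCA}_{\operatorname*{vf}})$ into the Picard groupoid of virtual objects is controlled entirely by the truncation of the $K$-theory spectrum to degrees $[0,1]$. Concretely, under $\Psi$ the Picard groupoid $V(\mathsf{LCA}_{\operatorname*{vf}})$ corresponds to $\tau_{[0,1]}K(\mathsf{LCA}_{\operatorname*{vf}})$. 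By Theorem~\ref{thm1} we have $K_{0}(\mathsf{LCA}_{\operatorname*{vf}})=0$ and $K_{1}(\mathsf{LCA}_{\operatorname*{vf}})\cong\mathbb{Q}_{>0}^{\times}$, so this truncation is just $\Sigma H\mathbb{Q}_{>0}^{\times}$ (the $k$-invariant has nowhere to land since $\pi_{0}=0$). By Example~\ref{example_TorsAIsLikeSigmaHA} this is exactly $\Psi(\mathsf{Tors}(\mathbb{Q}_{>0}^{\times}))$, giving the first claimed symmetric monoidal equivalence $V(\mathsf{LCA}_{\operatorname*{vf}})\simeq\mathsf{Tors}(\mathbb{Q}_{>0}^{\times})$.

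The second and genuinely harder assertion, Equation~\ref{lt3}, is to match $i_{\ast}\circ u$ with the Haar measure. First I would observe that $Ha|_{\mathsf{LCA}_{\operatorname*{vf}}}$ is itself a determinant functor into $\mathsf{Tors}(\mathbb{R}_{>0}^{\times})$, so by the universal property (Definition~\ref{def_UnivDetFunctor}) it factors as $f\circ u$ for some symmetric monoidal functor $f\colon V(\mathsf{LCA}_{\operatorname*{vf}})\simeq\mathsf{Tors}(\mathbb{Q}_{>0}^{\times})\rightarrow\mathsf{Tors}(\mathbb{R}_{>0}^{\times})$, unique up to the natural equivalence afforded by universality. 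The task is then to identify $f$ with the basechange $i_{\ast}$ along the inclusion $i\colon\mathbb{Q}_{>0}^{\times}\hookrightarrow\mathbb{R}_{>0}^{\times}$. Since both source and target torsor categories have trivial $\pi_{0}$, such a symmetric monoidal functor is determined on $\pi_{1}$, i.e.\ by the induced group homomorphism $\mathbb{Q}_{>0}^{\times}\rightarrow\mathbb{R}_{>0}^{\times}$. By Example~\ref{example_BaseChangeOfTorsAgreesWithNaturalMap}, identifying $f$ with $i_{\ast}$ amounts to checking that this homomorphism is the standard inclusion of positive rationals into positive reals.

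This is precisely where Theorem~\ref{thm1} does the crucial work beyond abstract nonsense. On $\pi_{1}$ the functor $u$ realizes the isomorphism $K_{1}(\mathsf{LCA}_{\operatorname*{vf}})\cong\mathbb{Q}_{>0}^{\times}$, and the effect of $i_{\ast}\circ u$ on $\pi_{1}$ is the map $K_{1}(\mathsf{LCA}_{\operatorname*{vf}})\rightarrow K_{1}(\mathsf{LCA})\cong\mathbb{R}_{>0}^{\times}$ induced by the exact inclusion functor. Theorem~\ref{thm1} computes this map to be exactly the inclusion $\mathbb{Q}_{>0}^{\times}\subset\mathbb{R}_{>0}^{\times}$, via the explicit generator computation showing $\alpha_{p}=[\,\mathbb{Q}_{p}\overset{\cdot p}{\rightarrow}\mathbb{Q}_{p}\,]$ maps to $v_{p}(p)=\tfrac{1}{p}$ under the Haar torsor. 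Matching the normalization here is the main obstacle: I expect the subtle point to be confirming that the identification $\bigoplus_{p}\mathbb{Z}\simeq\mathbb{Q}_{>0}^{\times}$ chosen at the end of Theorem~\ref{thm1} is compatible with the identification of $\pi_{1}V(\mathsf{LCA}_{\operatorname*{vf}})$ with $K_{1}(\mathsf{LCA}_{\operatorname*{vf}})$ under $\Psi$, and that the universal determinant functor $u$ induces the \emph{identity} (not merely an automorphism) on $\pi_{1}$ under these identifications. Once the generators $\alpha_{p}$ are tracked consistently through both the $K$-theoretic localization sequence and the Haar-measure computation, Equation~\ref{lt3} follows, since two symmetric monoidal functors between these torsor Picard groupoids agreeing on $\pi_{1}$ are naturally equivalent.
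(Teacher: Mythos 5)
Your proposal is correct and takes essentially the same route as the paper: the first part (comparison of $\pi_{0}$, $\pi_{1}$ and the vanishing of the stable $k$-invariant under $\Psi$, using Theorem \ref{thm1} and Example \ref{example_TorsAIsLikeSigmaHA}) is identical, and the second part likewise reduces Eq. \ref{lt3} to matching maps on $\pi_{1}$ via Theorem \ref{thm1}'s generator computation $\alpha_{p}\mapsto\frac{1}{p}$ together with Example \ref{example_BaseChangeOfTorsAgreesWithNaturalMap}. The only (inessential) difference is packaging: you obtain the comparison functor $f$ from the universal property of $u$ applied to $Ha\mid_{\mathsf{LCA}_{\operatorname*{vf}}}$, whereas the paper works with the functor $V(\mathsf{LCA}_{\operatorname*{vf}})\rightarrow V(\mathsf{LCA})$ induced by the inclusion of exact categories and Clausen's identification of $V(\mathsf{LCA})$ with the Haar torsor — these are the same functor, and the normalization issue you flag is exactly what the $\alpha_{p}$ tracking in Theorem \ref{thm1} resolves.
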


\begin{proof}
Following Deligne, the universal determinant functor of an exact category
$\mathsf{C}$ can be modelled through the virtual objects $V(\mathsf{C})$ of
\cite{MR902592}. This is the Picard groupoid belonging to truncated connective
$K$-theory under the correspondence of homotopy categories of Prop.
\ref{prop_HomotopyCatsOfPicardGroupoidsAndSpectra}, i.e.,%
\begin{align*}
\pi_{0}V(\mathsf{LCA}_{\operatorname*{vf}})  &  \cong\pi_{0}%
K^{\operatorname*{conn}}(\mathsf{LCA}_{\operatorname*{vf}})=0\text{,}\\
\pi_{1}V(\mathsf{LCA}_{\operatorname*{vf}})  &  \cong\pi_{1}%
K^{\operatorname*{conn}}(\mathsf{LCA}_{\operatorname*{vf}})\cong%
\mathbb{Q}_{>0}^{\times}\text{,}%
\end{align*}
both by Theorem \ref{thm1}. See also \cite{MR3302579} for more background on
the link between $K$-theory and $V(\mathsf{C})$. A Picard groupoid (as well as
a spectrum concentrated in degree $[0,1]$) is uniquely determined by these
values and the stable $k$-invariant%
\[
\pi_{0}K^{\operatorname*{conn}}(\mathsf{LCA}_{\operatorname*{vf}}%
)\otimes\mathbb{Z}/2\longrightarrow\pi_{1}K^{\operatorname*{conn}%
}(\mathsf{LCA}_{\operatorname*{vf}})\text{.}%
\]
Since $\pi_{0}K^{\operatorname*{conn}}(\mathsf{LCA}_{\operatorname*{vf}})=0$,
this map is necessarily zero. Hence, it follows that $V(\mathsf{LCA}%
_{\operatorname*{vf}})$ has trivial symmetry constraint. Just by comparison of
invariants, we deduce that $V(\mathsf{LCA}_{\operatorname*{vf}})\cong%
\mathsf{Tors}(\mathbb{Q}_{>0}^{\times})$, as this is (up to the homotopy
classification in $\mathsf{Picard}$) the unique connected Picard groupoid with
trivial stable $k$-invariant (Def. \ref{def_Signature}) and automorphism group
$\mathbb{Q}_{>0}^{\times}$ of its tensor unit. Since Theorem \ref{thm1} shows
that the induced symmetric monoidal functor%
\begin{equation}
V(\mathsf{LCA}_{\operatorname*{vf}})\longrightarrow V(\mathsf{LCA})
\label{lv3}%
\end{equation}
on $\pi_{1}$ of the Picard groupoids is just the inclusion $\mathbb{Q}%
_{>0}^{\times}\subset\mathbb{R}_{>0}^{\times}$ and $V(\mathsf{LCA})$
corresponds the usual Haar torsor, it follows that the universal determinant
on $\mathsf{LCA}_{\operatorname*{vf}}$ can itself be interpreted as suitable
choices of Haar measures, namely exactly those which only differ by positive
rational multiples from any fixed initial choice. The symmetric monoidal
functor of Eq. \ref{lv3} (by Example \ref{example_TorsAIsLikeSigmaHA}) after
applying $\Psi$ turns into the map of spectra
\begin{equation}
i\colon\Sigma H\mathbb{Q}_{>0}^{\times}\longrightarrow\Sigma H\mathbb{R}%
_{>0}^{\times} \label{lv5}%
\end{equation}
(or rather a homotopy class of maps of spectra). Hence, in order to prove that%
\[
\mathsf{LCA}_{\operatorname*{vf}}^{\times}\overset{u}{\longrightarrow
}\mathsf{Tors}(\mathbb{Q}_{>0}^{\times})\overset{i_{\ast}}{\longrightarrow
}\mathsf{Tors}(\mathbb{R}_{>0}^{\times})
\]
agrees with $Ha\mid_{\mathsf{LCA}_{\operatorname*{vf}}}$, we just need to show
that $i_{\ast}$ \textit{also} has the property that $\Psi$ sends it to the map
of Eq. \ref{lv5}. But this is just Example
\ref{example_BaseChangeOfTorsAgreesWithNaturalMap}.\newline
\end{proof}

Now we are ready to prove a claim we have made much earlier.

\begin{proof}
[Proof of Theorem \ref{thm_HaQCharacterization}]In
\S \ref{sect_RationalizedHaarMeasure} we introduced a functor $Ha^{\mathbb{Q}%
}\colon\mathsf{LCA}_{\operatorname*{vf}}^{\times}\rightarrow\mathsf{Tors}%
(\mathbb{Q}_{>0}^{\times})$, but we did not supply the extra data needed to
pin down a determinant functor as in Def. \ref{def_DeterminantFunctor}. The
claim we have to prove here amounts to saying that it is possible to extend
$Ha^{\mathbb{Q}}$ to a true determinant functor. We will prove this as
follows:\ We will instead work with Deligne's universal determinant functor
\cite{MR902592}, which we denote by $u$,%
\begin{equation}
\mathsf{LCA}_{\operatorname*{vf}}^{\times}\overset{u}{\longrightarrow
}V(\mathsf{LCA}_{\operatorname*{vf}})\cong\mathsf{Tors}(\mathbb{Q}%
_{>0}^{\times}) \label{lg_zp0}%
\end{equation}
(Prop. \ref{prop_1}). It tautologically satisfies the demands of Def.
\ref{def_DeterminantFunctor} and we shall show, reversely, that on objects and
arrows it can be identified with the description of $Ha^{\mathbb{Q}}$ in
\S \ref{sect_RationalizedHaarMeasure}. To this end, let $\mathsf{LCA}%
_{\operatorname*{vf}}^{\operatorname*{dec}}$ (\textquotedblleft%
\textit{decorated} vector-free LCA\ groups\textquotedblright) be the category
of pairs $(X,C)$ with $X\in\mathsf{LCA}$ and $C$ a compact open in $X$.
Morphisms $(X^{\prime},C^{\prime})\rightarrow(X,C)$ are morphisms $X^{\prime
}\rightarrow X$ of the LCA\ groups and there is no interaction with the
choices of $C^{\prime}$ or $C$. Evidently, the forgetful functor%
\begin{align*}
T\colon\mathsf{LCA}_{\operatorname*{vf}}^{\operatorname*{dec}}  &
\longrightarrow\mathsf{LCA}_{\operatorname*{vf}}\\
(X,C)  &  \longmapsto X
\end{align*}
is an equivalence of categories. Equip $\mathsf{LCA}_{\operatorname*{vf}%
}^{\operatorname*{dec}}$ with the induced exact structure so that $T$ becomes
an exact equivalence of exact categories. A sequence is exact iff $T$ sends it
to an exact sequence. But this means that the universal determinant functor
$u^{\operatorname*{dec}}$ of $\mathsf{LCA}_{\operatorname*{vf}}%
^{\operatorname*{dec}}$ is just the same, or said differently:\ The diagram of
solid arrows%
\begin{equation}%
{
\begin{tikzcd}
	{\mathsf{LCA}_{\operatorname{vf} }^{\operatorname{dec} \times}}
&&&& {\mathsf{LCA}_{\operatorname{vf} }^{\times}} \\
	&& {\mathsf{Tors}({\mathbb{Q}_{>0}^{\times}})} \\
	\\
	&& {\mathsf{Tors}({\mathbb{R}_{>0}^{\times}})}
	\arrow["T", shift left, from=1-1, to=1-5]
	\arrow["{u^{\operatorname{dec} }}"', from=1-1, to=2-3]
	\arrow["{h^{\operatorname{dec} }}"', from=1-1, to=4-3]
	\arrow["H", shift left, dashed, from=1-5, to=1-1]
	\arrow["u", from=1-5, to=2-3]
	\arrow["{Ha\mid_{\mathsf{LCA}_{\operatorname{vf} }}}", from=1-5, to=4-3]
	\arrow["{i_*}", from=2-3, to=4-3]
\end{tikzcd}
}
\label{lg_zp1}%
\end{equation}
commutes by Eq. \ref{lt3}. Now recall the description of $Ha^{\mathbb{Q}}$ in
\S \ref{sect_RationalizedHaarMeasure}: In Step 1, for any object
$X\in\mathsf{LCA}_{\operatorname*{vf}}$ we pick a compact open $C\subseteq X$.
Any such choice can be prolonged to a choice for any object, but that datum is
just what we need to pick a concrete inverse equivalence $H$ (the dashed arrow
in Diagram \ref{lg_zp1}). Now consider the Haar measure, restricted to
$\mathsf{LCA}_{\operatorname*{vf}}$. It similarly admits a lift to the
decorated category, denoted by $h^{\operatorname*{dec}}$ above. We can now
trivialize the torsors $Ha(X)$ for all objects in $\mathsf{LCA}%
_{\operatorname*{vf}}^{\operatorname*{dec}}$: In the torsor of Haar measures
of $X$ we can pick the unique element $\mu_{\operatorname*{root}}\in Ha(X)$
such that $\mu_{\operatorname*{root}}(C)=1$. Then $h^{\operatorname*{dec}%
}(X,C)$ in Diagram \ref{lg_zp1} can be described as the multiples
$\mathbb{R}_{>0}^{\times}\cdot\mu_{\operatorname*{root}}$ inside, and agreeing
with all of, $Ha(X)$. Since this construction of $\mu_{\operatorname*{root}}$
matches the recipe in Step 2 of \S \ref{sect_RationalizedHaarMeasure}, we
precisely get the characterization that%
\begin{equation}%
{
\begin{tikzcd}
	{Ha^{\mathbb{Q} }(X,C)} & \subset& {Ha(X,C)} \\
	{\mathbb{Q}_{>0}^{\times} \cdot\mu_{\operatorname{root} }} & \subset
& {\mathbb{R}_{>0}^{\times} \cdot\mu_{\operatorname{root} }.}
	\arrow[equals, from=1-1, to=2-1]
	\arrow[equals, from=1-3, to=2-3]
\end{tikzcd}
}
\label{d_5}%
\end{equation}
Any arrow $f\colon X^{\prime}\rightarrow X$ in $\mathsf{LCA}%
_{\operatorname*{vf}}^{\times}$ lifts under $H$ (of Diagram \ref{lg_zp1}) to
an arrow $(X^{\prime},C^{\prime})\rightarrow(X,C)$ in $\mathsf{LCA}%
_{\operatorname*{vf}}^{\operatorname*{dec}\times}$ and since the induced map
in the Haar torsor is just basechanged from $\mathbb{Q}$ to $\mathbb{R}$ by%
\[
h^{\operatorname*{dec}}=i_{\ast}\circ u^{\operatorname*{dec}}\text{,}%
\]
the distinguished subgroups of rational multiples, as in Eq. \ref{d_5}, are
respected by $h^{\operatorname*{dec}}$. This finishes the proof.
\end{proof}

\begin{theorem}
\label{thm_main}Suppose $(\mathsf{P},\boxtimes)$ is a Picard groupoid and%
\[
\mathcal{D}\colon\mathsf{LCA}_{\operatorname*{vf}}^{\times}\longrightarrow
\mathsf{P}%
\]
is any determinant functor. Then there exists a morphism of Picard groupoids
$f$ such that%
\begin{equation}%
{
\begin{tikzcd}
	{\mathsf{LCA}_{\operatorname{vf} }^{\times}} && {(\mathsf{Tors}(\mathbb
{Q}_{>0}^{\times}),\otimes)} \\
	\\
	&& {(\mathsf{P},\boxtimes)}
	\arrow["{{Ha^\mathbb{Q} }}", from=1-1, to=1-3]
	\arrow["{{\mathcal{D}}}"', from=1-1, to=3-3]
	\arrow["f", dashed, from=1-3, to=3-3]
\end{tikzcd}
}
\label{diag_p_2}%
\end{equation}
commutes, where $Ha^{\mathbb{Q}}$ is the Haar measure determinant functor,
restricted to only allowing rational multiples
(\S \ref{sect_RationalizedHaarMeasure}). And more precisely, $Ha^{\mathbb{Q}}$
is the universal determinant functor of $\mathsf{LCA}_{\operatorname*{vf}}$ in
the sense of Def. \ref{def_UnivDetFunctor}.
\end{theorem}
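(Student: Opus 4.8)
The plan is to deduce Theorem~\ref{thm_main} almost entirely from the machinery already assembled, treating it as the culmination rather than a fresh computation. The central observation is that Proposition~\ref{prop_1} already identifies the virtual objects $V(\mathsf{LCA}_{\operatorname*{vf}})$ with $\mathsf{Tors}(\mathbb{Q}_{>0}^{\times})$ as Picard groupoids, and exhibits the universal determinant functor $u$ as landing there. By Deligne's theory (Def.~\ref{def_UnivDetFunctor}), the universal determinant functor is characterized by the property that precomposition with it induces, for every target Picard groupoid $\mathsf{P}$, an equivalence $\operatorname{Hom}^{\otimes}(V(\mathsf{LCA}_{\operatorname*{vf}}),\mathsf{P}) \simeq \det(\mathsf{LCA}_{\operatorname*{vf}},\mathsf{P})$. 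Thus the existence and essential uniqueness of the dashed arrow $f$ in Diagram~\ref{diag_p_2} is \emph{formally} the statement that $u$ is universal, once I know that $Ha^{\mathbb{Q}}$ \emph{is} (isomorphic to) $u$.

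So the first step is to invoke Proposition~\ref{prop_1} to fix the symmetric monoidal equivalence $V(\mathsf{LCA}_{\operatorname*{vf}}) \cong \mathsf{Tors}(\mathbb{Q}_{>0}^{\times})$, under which the universal functor $u$ becomes a functor $\mathsf{LCA}_{\operatorname*{vf}}^{\times} \to \mathsf{Tors}(\mathbb{Q}_{>0}^{\times})$. The second, and genuinely necessary, step is to identify this $u$ with $Ha^{\mathbb{Q}}$: this is precisely the content of Theorem~\ref{thm_HaQCharacterization}, whose proof I have already given above by lifting to the decorated category $\mathsf{LCA}_{\operatorname*{vf}}^{\operatorname*{dec}}$ and checking that $u$, read through the equivalence, singles out exactly the rational-multiple subtorsors of Haar measures that define $Ha^{\mathbb{Q}}$. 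Combining Proposition~\ref{prop_1} (identity \ref{lt3}, $i_{\ast}\circ u = Ha\mid_{\mathsf{LCA}_{\operatorname*{vf}}}$) with the description in \S\ref{sect_RationalizedHaarMeasure} yields an isomorphism of determinant functors $Ha^{\mathbb{Q}} \cong u$ in the category $\det(\mathsf{LCA}_{\operatorname*{vf}},\mathsf{Tors}(\mathbb{Q}_{>0}^{\times}))$.

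The third step is then purely formal: given any determinant functor $\mathcal{D}\colon \mathsf{LCA}_{\operatorname*{vf}}^{\times} \to \mathsf{P}$, its class in $\det(\mathsf{LCA}_{\operatorname*{vf}},\mathsf{P})$ corresponds, under the equivalence $\operatorname{Hom}^{\otimes}(V(\mathsf{LCA}_{\operatorname*{vf}}),\mathsf{P}) \simeq \det(\mathsf{LCA}_{\operatorname*{vf}},\mathsf{P})$, to a symmetric monoidal functor $f_0\colon V(\mathsf{LCA}_{\operatorname*{vf}}) \to \mathsf{P}$ with $f_0 \circ u \cong \mathcal{D}$. Transporting $f_0$ across the identification $V(\mathsf{LCA}_{\operatorname*{vf}}) \cong \mathsf{Tors}(\mathbb{Q}_{>0}^{\times})$ and along the isomorphism $u \cong Ha^{\mathbb{Q}}$ produces the desired $f$ with $f \circ Ha^{\mathbb{Q}} \cong \mathcal{D}$, which is the commutativity of Diagram~\ref{diag_p_2} up to natural isomorphism. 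The final assertion, that $Ha^{\mathbb{Q}}$ is itself universal, is immediate once $Ha^{\mathbb{Q}} \cong u$, since universality is invariant under isomorphism of determinant functors.

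I expect the only real subtlety to be bookkeeping about \emph{in which 2-category or homotopy category} the diagrams are required to commute. Deligne's universal property delivers factorizations that commute up to natural isomorphism of symmetric monoidal functors, not on the nose, so I would state Diagram~\ref{diag_p_2} as commuting up to natural equivalence (consistent with $f$ being a morphism in $\mathsf{Picard}$), and I would be careful to phrase ``agrees with'' throughout as ``naturally isomorphic as determinant functors.'' The one place where something could go wrong is if the equivalence of Proposition~\ref{prop_HomotopyCatsOfPicardGroupoidsAndSpectra} only preserved objects and $k$-invariants but not the full functor categories; however, since I am invoking Deligne's universal property directly at the level of Picard groupoids (via $V(\mathsf{LCA}_{\operatorname*{vf}})$) rather than transporting the universal property through the spectral side, this obstacle does not arise, and the argument reduces to citing \ref{prop_1} and \ref{thm_HaQCharacterization} together with the defining property of $u$.
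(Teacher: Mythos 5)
Your proposal is correct and follows essentially the same route as the paper: the paper's own proof likewise reduces Theorem~\ref{thm_main} to Theorem~\ref{thm_HaQCharacterization} (which, via Proposition~\ref{prop_1} and the decorated category, identifies $Ha^{\mathbb{Q}}$ with Deligne's universal determinant functor $u$) and then invokes the universal property of Def.~\ref{def_UnivDetFunctor} to produce the factorization. Your additional care about commutativity holding up to natural equivalence of symmetric monoidal functors is a sensible clarification, not a departure from the paper's argument.
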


\begin{proof}
The functor $Ha^{\mathbb{Q}}$ of \S \ref{sect_RationalizedHaarMeasure} extends
by Theorem \ref{thm_HaQCharacterization} to a\footnote{or: the} universal
determinant functor on $\mathsf{LCA}_{\operatorname*{vf}}$. The factorization
in our claim, Diagram \ref{diag_p_2}, therefore follows from the
characterization of universality in Def. \ref{def_UnivDetFunctor}.
\end{proof}

%

\appendix

\section{Gillet--Grayson model\label{sect_GilletGraysonModel}}

Let $\mathsf{C}$ be a pointed exact category, i.e., an exact category with a
fixed choice of a zero object. This will be denoted by $0$. Following
\cite{MR909784, MR2007234}, define a simplicial set $G_{\bullet}\mathsf{C}$
whose $n$-simplices are given by a pair of commutative diagrams%
\[%
\xymatrix@!=0.157in{
&                         &                         &                        & P_{n/(n-1)}
\\
&                         &                         & \cdots\ar@{^{(}%
.>}[r] & \vdots\ar@{.>>}[u] \\
&                         & P_{2/1} \ar@{^{(}.>}[r] & \cdots\ar@{^{(}%
.>}[r] & P_{n/1} \ar@{.>>}[u] \\
& P_{1/0} \ar@{^{(}.>}[r] & P_{2/0} \ar@{^{(}.>}[r] \ar@{.>>}[u] & \cdots
\ar@{^{(}.>}[r] & P_{n/0} \ar@{.>>}[u] \\
P_0 \ar@{^{(}.>}[r] & P_1 \ar@{^{(}.>}[r] \ar@{.>>}[u] & P_2 \ar@{^{(}%
.>}[r] \ar@{.>>}[u] & \cdots\ar@{^{(}.>}[r] & P_n \ar@{.>>}[u]
}%
\qquad%
\xymatrix@!=0.157in{
&                         &                         &                        & P_{n/(n-1)}
\\
&                         &                         & \cdots\ar@{^{(}%
->}[r] & \vdots\ar@{->>}[u] \\
&                         & P_{2/1} \ar@{^{(}->}[r] & \cdots\ar@{^{(}%
->}[r] & P_{n/1} \ar@{->>}[u] \\
& P_{1/0} \ar@{^{(}->}[r] & P_{2/0} \ar@{^{(}->}[r] \ar@{->>}[u] & \cdots
\ar@{^{(}->}[r] & P_{n/0} \ar@{->>}[u] \\
P^{\prime}_0 \ar@{^{(}->}[r] & P^{\prime}_1 \ar@{^{(}->}[r] \ar@
{->>}[u] & P^{\prime}_2 \ar@{^{(}->}[r] \ar@{->>}[u] & \cdots\ar@{^{(}%
->}[r] & P^{\prime}_n \ar@{->>}[u]
}%
\text{,}%
\]
such that (1) the diagrams agree strictly\footnote{i.e., not just up to a
natural isomorphism.} above the bottom row, (2) each sequence $P_{i}%
\hookrightarrow P_{j}\twoheadrightarrow P_{j/i}$ is exact, (2') each sequence
$P_{i}^{\prime}\hookrightarrow P_{j}^{\prime}\twoheadrightarrow P_{j/i}%
^{\prime}$ is exact, (3) each sequence $P_{i/j}\hookrightarrow P_{m/j}%
\twoheadrightarrow P_{m/i}$ is exact. The face and degeneracy maps come from
deleting the $i$-th row and column, resp. by duplicating them. For details we
refer to the references. The $0$-simplices are pairs $(P,P^{\prime})$ of
objects. The $1$-simplices are pairs of exact sequences%
\begin{equation}%
\xymatrix{
P_0 \ar@{^{(}.>}[r] & P_1 \ar@{.>>}[r] & P_{1/0} & \qquad& P^{\prime}%
_0 \ar@{^{(}->}[r] & P^{\prime}_1 \ar@{->>}[r] & P_{1/0}
}
\label{lcimez22}%
\end{equation}
with the same cokernel\footnote{i.e., not just up to a natural isomorphism.}.
This pair corresponds to a $1$-simplex from the point $(P_{0},P_{0}^{\prime})$
to the point $(P_{1},P_{1}^{\prime})$. The main result of Gillet and Grayson
is the equivalence%
\[
K^{\operatorname*{conn}}(\mathsf{C})\cong\left\vert G_{\bullet}\mathsf{C}%
\right\vert \text{,}%
\]
or more specifically:\ They equip the space $\left\vert G_{\bullet}%
\mathsf{C}\right\vert $ with an infinite loop space structure and identifying
it with a connective spectrum, it is a model for $K^{\operatorname*{conn}%
}(\mathsf{C})$.

\begin{example}
\label{example_pi0GG}The identification with the zero-th $K$-group is as
follows: the $0$-simplex $(P,P^{\prime})$ lies in the connected component
$[P^{\prime}]-[P]\in\pi_{0}K^{\operatorname*{conn}}(\mathsf{C})$. Other
authors use other sign conventions.\footnote{Weibel's $K$-book uses precisely
the opposite signs.}
\end{example}

\begin{example}
\label{example_pi1GG}The identification with the first $K$-group is more
complicated. We only need to know that any automorphism $P\overset{\varphi
}{\longrightarrow}P$ of an object $P\in\mathsf{C}$ determines a unique class
in $\pi_{1}K^{\operatorname*{conn}}(\mathsf{C})$, corresponding to the loop%
\begin{equation}%
{
\begin{tikzcd}
	{(0,0)} &&&& {(P,P)} \\
	\\
	&& {(0,0)}
	\arrow["\xi", from=1-1, to=1-5]
	\arrow["{\nu(0)}", from=3-3, to=1-1]
	\arrow["{\nu(P)}"', from=3-3, to=1-5]
\end{tikzcd}
}
\label{diag_loop}%
\end{equation}
around the basepoint $(0,0)$ (where $0$ is the designated zero object of the
pointed category $\mathsf{C}$), where $\nu(P)$ and $\xi$ come from the
$1$-simplices%
\begin{equation}%
{
\begin{tikzcd}
	0 && P && P \\
	0 && P && P
	\arrow[from=1-1, to=1-3]
	\arrow["1", from=1-3, to=1-5]
	\arrow[dotted, from=2-1, to=2-3]
	\arrow["1"', dotted, from=2-3, to=2-5]
\end{tikzcd}
}%
\qquad\text{and}\qquad%
{
\begin{tikzcd}
	0 && P && P \\
	0 && P && P
	\arrow[from=1-1, to=1-3]
	\arrow["\varphi", from=1-3, to=1-5]
	\arrow[dotted, from=2-1, to=2-3]
	\arrow["1", dotted, from=2-3, to=2-5]
\end{tikzcd}
}
\label{lcz60}%
\end{equation}
in $G_{\bullet}(\mathsf{C})$ respectively. See \cite{MR1409623} for details
and proofs.
\end{example}

\begin{remark}
It follows from Prop. \ref{prop_HomotopyCatsOfPicardGroupoidsAndSpectra} that
the $1$-truncation $\tau_{\leq1}K^{\operatorname*{conn}}(\mathsf{C})$ can, up
to stable homotopy type, be identified with a Picard groupoid. Deligne's work
\cite{MR902592} shows that the truncation map\footnote{the co-unit of the
adjoint pair attached to the inclusion of $1$-truncated spectra into all
spectra} $K^{\operatorname*{conn}}(\mathsf{C})\longrightarrow\tau_{\leq
1}K^{\operatorname*{conn}}(\mathsf{C})$ essentially can be identified with the
concept of a determinant functor. This entire text rests on making this idea
explicit for $\mathsf{LCA}_{\operatorname*{vf}}$.
\end{remark}

\begin{acknowledgement}
We thank M. Groechenig and D. Macias Castillo for their help.
\end{acknowledgement}

\bibliographystyle{amsalpha}
\bibliography{ollinewbib}

\def\cprime{$'$} \def\polhk#1{\setbox0=\hbox{#1}{\ooalign{\hidewidth
  \lower1.5ex\hbox{`}\hidewidth\crcr\unhbox0}}} \def\cprime{$'$}
  \def\cprime{$'$} \def\cprime{$'$} \def\cprime{$'$}
\providecommand{\bysame}{\leavevmode\hbox to3em{\hrulefill}\thinspace}
\providecommand{\MR}{\relax\ifhmode\unskip\space\fi MR }
\providecommand{\MRhref}[2]{%
  \href{http://www.ams.org/mathscinet-getitem?mr=#1}{#2}
}
\providecommand{\href}[2]{#2}
\begin{thebibliography}{ADGB22}

\bibitem[ADGB22]{MR4510389}
L.~Au{\ss }enhofer, D.~Dikranjan, and A.~Giordano~Bruno, \emph{Topological
  groups and the {P}ontryagin--van {K}ampen duality---an introduction}, De
  Gruyter Studies in Mathematics, vol.~83, De Gruyter, Berlin, [2022]
  \copyright 2022. \MR{4510389}

\bibitem[Art24]{MR4831262}
M.~Artusa, \emph{Duality for condensed cohomology of the {W}eil group of a
  {$p$}-adic field}, Doc. Math. \textbf{29} (2024), no.~6, 1381--1434.
  \MR{4831262}

\bibitem[Art25]{artusa2025}
M.~Artusa, \emph{{D}uality for the condensed {W}eil-\'etale realisation of
  $1$-motives over $p$-adic fields}, 2025.

\bibitem[BK90]{MR1086888}
S.~Bloch and K.~Kato, \emph{{$L$}-functions and {T}amagawa numbers of motives},
  The {G}rothendieck {F}estschrift, {V}ol.\ {I}, Progr. Math., vol.~86,
  Birkh\"auser Boston, Boston, MA, 1990, pp.~333--400. \MR{1086888}

\bibitem[Bra19]{MR4028830}
O.~Braunling, \emph{On the relative {$K$}-group in the {ETNC}}, New York J.
  Math. \textbf{25} (2019), 1112--1177. \MR{4028830}

\bibitem[Bre11]{MR2842932}
M.~Breuning, \emph{Determinant functors on triangulated categories}, J.
  K-Theory \textbf{8} (2011), no.~2, 251--291. \MR{2842932}

\bibitem[B{\"u}h10]{MR2606234}
T.~B{\"u}hler, \emph{Exact categories}, Expo. Math. \textbf{28} (2010), no.~1,
  1--69. \MR{2606234 (2011e:18020)}

\bibitem[Cla17]{clausen}
D.~Clausen, \emph{A {K}-theoretic approach to {A}rtin maps}, arXiv:1703.07842
  [math.KT] (2017).

\bibitem[Del87]{MR902592}
P.~Deligne, \emph{Le d\'eterminant de la cohomologie}, Current trends in
  arithmetical algebraic geometry ({A}rcata, {C}alif., 1985), Contemp. Math.,
  vol.~67, Amer. Math. Soc., Providence, RI, 1987, pp.~93--177. \MR{902592
  (89b:32038)}

\bibitem[FM18]{MR3874942}
M.~Flach and B.~Morin, \emph{Weil-\'{e}tale cohomology and zeta-values of
  proper regular arithmetic schemes}, Doc. Math. \textbf{23} (2018),
  1425--1560. \MR{3874942}

\bibitem[Fol16]{MR3444405}
G.~B. Folland, \emph{A course in abstract harmonic analysis}, second ed.,
  Textbooks in Mathematics, CRC Press, Boca Raton, FL, 2016. \MR{3444405}

\bibitem[FPR94]{MR1265546}
J.-M. Fontaine and B.~Perrin-Riou, \emph{Autour des conjectures de {B}loch et
  {K}ato: cohomologie galoisienne et valeurs de fonctions {$L$}}, Motives
  ({S}eattle, {WA}, 1991), Proc. Sympos. Pure Math., vol.~55, Amer. Math. Soc.,
  Providence, RI, 1994, pp.~599--706. \MR{1265546}

\bibitem[GG87]{MR909784}
H.~Gillet and D.~Grayson, \emph{The loop space of the {$Q$}-construction},
  Illinois J. Math. \textbf{31} (1987), no.~4, 574--597. \MR{909784}

\bibitem[GG03]{MR2007234}
\bysame, \emph{Erratum to: ``{T}he loop space of the {$Q$}-construction''},
  Illinois J. Math. \textbf{47} (2003), no.~3, 745--748. \MR{2007234}

\bibitem[GM24]{MR4699875}
T.~Geisser and B.~Morin, \emph{Pontryagin duality for varieties over {$p$}-adic
  fields}, J. Inst. Math. Jussieu \textbf{23} (2024), no.~1, 425--462.
  \MR{4699875}

\bibitem[Hv19]{hr2}
R.~Henrard and A.-C. {van Roosmalen}, \emph{Derived categories of one-sided
  exact categories and their localizations}, \texttt{arXiv:1903.12647} (2019).

\bibitem[JO12]{MR2981952}
N.~Johnson and A.~Osorno, \emph{Modeling stable one-types}, Theory Appl. Categ.
  \textbf{26} (2012), No. 20, 520--537. \MR{2981952}

\bibitem[Knu02]{MR1914072}
F.~Knudsen, \emph{Determinant functors on exact categories and their extensions
  to categories of bounded complexes}, Michigan Math. J. \textbf{50} (2002),
  no.~2, 407--444. \MR{1914072}

\bibitem[KS99]{MR1687096}
R.~Kottwitz and D.~Shelstad, \emph{Foundations of twisted endoscopy},
  Ast\'{e}risque (1999), no.~255, vi+190. \MR{1687096}

\bibitem[KS06]{MR2182076}
M.~Kashiwara and P.~Schapira, \emph{Categories and sheaves}, Grundlehren der
  Mathematischen Wissenschaften [Fundamental Principles of Mathematical
  Sciences], vol. 332, Springer-Verlag, Berlin, 2006. \MR{2182076}

\bibitem[Lic09]{MR2552104}
S.~Lichtenbaum, \emph{The {W}eil-\'{e}tale topology for number rings}, Ann. of
  Math. (2) \textbf{170} (2009), no.~2, 657--683. \MR{2552104}

\bibitem[Lic24]{MR4699877}
\bysame, \emph{Special values of zeta-functions of regular schemes}, J. Inst.
  Math. Jussieu \textbf{23} (2024), no.~1, 495--519. \MR{4699877}

\bibitem[Loo53]{MR54173}
L.~Loomis, \emph{An introduction to abstract harmonic analysis}, D. Van
  Nostrand Co., Inc., Toronto-New York-London, 1953. \MR{54173}

\bibitem[MTW15]{MR3302579}
F.~Muro, A.~Tonks, and M.~Witte, \emph{On determinant functors and
  {$K$}-theory}, Publ. Mat. \textbf{59} (2015), no.~1, 137--233. \MR{3302579}

\bibitem[Nen96]{MR1409623}
A.~Nenashev, \emph{Double short exact sequences produce all elements of
  {Q}uillen's {$K_1$}}, Algebraic {$K$}-theory ({P}ozna\'n, 1995), Contemp.
  Math., vol. 199, Amer. Math. Soc., Providence, RI, 1996, pp.~151--160.
  \MR{1409623}

\bibitem[Oes83]{MR750319}
J.~Oesterl\'{e}, \emph{Compatibilit\'{e} de la suite exacte de {P}oitou-{T}ate
  aux mesures de {H}aar}, Seminar on number theory, 1982--1983 ({T}alence,
  1982/1983), Univ. Bordeaux I, Talence, 1983, pp.~Exp. No. 19, 17. \MR{750319}

\bibitem[Pat12]{MR2981817}
D.~Patel, \emph{de {R}ham epsilon factors}, Invent. Math. \textbf{190} (2012),
  no.~2, 299--355. \MR{2981817}

\bibitem[Sch06]{MR2206639}
M.~Schlichting, \emph{Negative {$K$}-theory of derived categories}, Math. Z.
  \textbf{253} (2006), no.~1, 97--134. \MR{2206639}

\bibitem[She96]{MR1409625}
C.~Sherman, \emph{Connecting homomorphisms in localization sequences},
  Algebraic {$K$}-theory ({P}ozna\'n, 1995), Contemp. Math., vol. 199, Amer.
  Math. Soc., Providence, RI, 1996, pp.~175--183. \MR{1409625}

\bibitem[She98]{MR1621689}
\bysame, \emph{On {$K_1$} of an exact category}, $K$-Theory \textbf{14} (1998),
  no.~1, 1--22. \MR{1621689}

\bibitem[SR72]{MR0338002}
N.~Saavedra~Rivano, \emph{Cat\'egories {T}annakiennes}, Lecture Notes in
  Mathematics, Vol. 265, Springer-Verlag, Berlin-New York, 1972. \MR{0338002}

\bibitem[Wei13]{MR3076731}
C.~Weibel, \emph{The {$K$}-book}, Graduate Studies in Mathematics, vol. 145,
  American Mathematical Society, Providence, RI, 2013, An introduction to
  algebraic $K$-theory. \MR{3076731}

\end{thebibliography}

\end{document}